\documentclass[onecolumn]{autart}    % Enable this line and disable the 
\usepackage{graphicx}     
\usepackage{epstopdf}
\usepackage{amsmath,amssymb,amsfonts}
\usepackage{color}
\usepackage{lipsum}
\usepackage{epstopdf}
\usepackage{amsopn}

\usepackage{natbib}

\usepackage{tikz}
\usetikzlibrary{arrows}
\usepackage{verbatim}
\usepackage{algorithm}
\usepackage{algpseudocode}
\usepackage{calc}
\usepackage{mathrsfs}  
\usepackage[normalem]{ulem}
\usepackage[english]{babel}

\newcommand{\bmat}{\left[ \begin{matrix}}
	\newcommand{\emat}{\end{matrix} \right]}
\newcommand{\innerprod}[2]{\langle{#1},\,{#2}\rangle}

\DeclareMathOperator*{\argmin}{argmin} %GF declared new operator

\newcommand{\Rbb}{\mathbb R}

\newcommand{\Nbb}{\mathbb N}

\usepackage{dsfont}

\newcommand{\pb}{\mathbf  p}

\newcommand{\Ccal}{\mathcal{C}}
\newcommand{\Dcal}{\mathcal{D}}

\newcommand{\Bcal}{\mathcal{B}}
\newcommand{\Rcal}{\mathcal{R}}
\newcommand{\Scal}{\mathcal{S}}

\newcommand{\Hcal}{\mathcal{H}}
\newcommand{\Kcal}{\mathcal{K}}

\newcommand{\Xcal}{\mathcal{X}}

\newtheorem{theorem}{Theorem}
\newtheorem{remark}[theorem]{Remark}

\newtheorem{lemma}[theorem]{Lemma}
\newtheorem{definition}[theorem]{Definition}
\newtheorem{proposition}[theorem]{Proposition}
\newtheorem{corollary}[theorem]{Corollary}

\newtheorem{assumption}{Assumption}

\renewcommand{\alg}[1]{\begin{align} #1 \end{align}}
\newcommand{\nn}{\nonumber}

\newcommand{\fix}[1]{{\color{black} #1}}

\newcommand\norm[1]{\left\lVert#1\right\rVert} % GF norm of 

\begin{document}
\begin{frontmatter}
%\runtitle{Insert a suggested running title}  % Running title for regular 
                                              % papers but only if the title  
                                              % is over 5 words. Running title 
                                              % is not shown in output.

\title{Identification of forward models:\\ a nonparametric approach} % Title, preferably not more 
                                                % than 10 words.

%\thanks[footnoteinfo]{TBA}

\author[unipd]{Giulio Fattore}\ead{fattoregiu@dei.unipd.it},    % Add the 
\author[unipd]{Marco Peruzzo}\ead{peruzzomar@dei.unipd.it},               % e-mail address 
\author[Svezia]{Giacomo Sartori}\ead{giacomo.sartori@ntnu.no},  % (ead) as shown
\author[unipd]{Mattia Zorzi}\ead{zorzimat@dei.unipd.it}  % (ead) as shown

\address[unipd]{Department of Information Engineering, University of Padova, Via Gradenigo 6/B, 35131 Padova, Italy}  % Please supply                                              
\address[Svezia]{Department of Chemical Engineering, NTNU
N- 7491 Trondheim, Norway}             % full addresses
       % here.

\begin{keyword}                           % Five to ten keywords,  
System identification; Kernel-based methods; Tikhonov regularization.
\end{keyword}                             % keyword list or with the 
                                          % help of the Automatica 
                                          % keyword wizard

\begin{abstract}                     
 In this paper we propose a new kernel-based method for the identification of the impulse responses of forward models. The resulting estimator leads to a nonlinear Tikhonov regularization problem for which we prove the existence of a solution. The latter result makes legitimate to approximate the forward model through a high-order Moving Average with eXogenous  input (MAX)
  model, i.e. the numerical solution found using this model introduces only a negligible bias in the estimate. The optimization of the marginal likelihood to estimate the kernel hyperparameters is also taken into account. Since there does not exist a closed-form expression for the marginal likelihood, we present an evaluation method that relies on the Laplace approximation of the marginal likelihood. Finally, some numerical experiments are discussed to show the effectiveness of the proposed method.
\end{abstract}

\end{frontmatter}

\section{Introduction}

A common problem that often appears in various fields of science and engineering is the identification of a dynamical model from input-output observations. In the context of linear discrete-time systems, a well established paradigm is the Prediction Error Method (PEM), see \cite{LJUNG_SYS_ID_1999,SODERSTROM_STOICA_1988}. The challenging aspect of such approach is the selection of the model class which is typically addressed using the Akaike Information Criterion (AIC) or Bayesian Information Criterion (BIC), see \cite{AKAIKE_1974,SCHWARZ_1978}.
A way to overcome this issue is to infer the impulse responses characterizing the system from the observations. The corresponding problem, which is infinite-dimensional,  is inherently ill-posed, as it relies on a finite set of available observations, and regularization is needed. The resulting approach is known as kernel-based method \citep{PILLONETTO_DENICOLAO2010,EST_TF_REVISITED_2012,doi:10.1080/00207179.2019.1578407}. From a Bayesian perspective, the impulse responses are modeled as  zero-mean Gaussian processes with a certain covariance function (or simply kernel), see \cite{PILLONETTO_2011_PREDICTION_ERROR}. Therefore, the prior information on the impulse responses is introduced by assigning an appropriate kernel. Examples of prior information on the impulse responses are the requirement to be absolutely summable, i.e. the corresponding system is Bounded Input Bounded Output (BIBO) stable, and possesses a certain level of smoothness. Many approaches have been proposed to design kernels for system identification, see e.g.  \cite{CHEN2018109,ZORZI2018125,marconato2017filter,zorzi2021second,chen2015kernel1,chen2015kernel2,FUJI1,FUJI2,chen2014system,10039069,BKRON}.

In this paper our attention is devoted to learning the impulse responses of a forward (or simulation) model, namely the output is the sum of the filtered input and filtered white noise. The common way to tackle  this problem is to re-parametrize the model using the impulse responses of the one-step ahead predictor, i.e. the output is the sum of the one-step ahead output predictor and white noise, and learn the latter by means of the kernel-based method \citep{PILLONETTO_2011_PREDICTION_ERROR}. However, it is very difficult, if not almost impossible, to induce some a priori information on the impulse responses of the forward model using this approach. For instance, in order to obtain a BIBO stable forward model it is necessary to modify the kernel-based method using a sequential strategy \citep{PILLONETTO2022110169}. In general, the a priori information on the impulse responses of the forward model is more complex:
for instance, to model the generation of synchronized burst or coherence resonance phenomena in neural systems it is necessary to impose certain time correlation conditions on one impulse response of the forward model \citep{guo2011inhibition,SilvaViella}.

In this paper we propose a new kernel-based method to estimate forward models.  This approach allows us to frame the problem directly in terms of the impulse responses of the forward model, making it possible to embed the a priori information on the forward parameterization. It turns out this paradigm is characterized by a nonlinear infinite-dimensional Tikhonov regularization problem. We prove such problem does  admit solution under a reasonable hypothesis on the initial conditions. Moreover, we characterize the structure of all the possible solutions and the latter represents a generalization of the representer theorem  \citep{kimeldorf1970correspondence}. Finally, this existence result allows to approximate the forward model through 
a high-order Moving Average with eXogenous  input (MAX) model, i.e. the numerical solution found using a high-order MAX model introduces only a negligible bias in the estimate.

The kernels depend on some hyperparameters which are usually inferred from the data by optimizing the so-called marginal likelihood \citep{RASMUSSEN_WILLIAMNS_2006,chen2013implementation}.

Another significant challenge in our setup is the difficulty of deriving a closed-form expression for the marginal  likelihood: its evaluation involves the computation of an integral. Therefore, we propose a method to evaluate the marginal likelihood, that makes use of the Laplace approximation, and illustrate its effectiveness through numerical simulations. This paper builds upon some preliminary results without proof presented in the conference paper  \citep{BJREG_CONF}.

The paper is organized as follows. 
In Section~\ref{sec:prob_form}, we introduce the problem setup, namely the nonlinear Tikhonov regularization problem and the problem to learn the hyperaparameters. In Section~\ref{sec:exist_sol}, we establish the existence of a solution to the Tikhonov problem and the corresponding representation. The  procedure to evaluate  the approximated marginal likelihood for estimating the hyperparameters is discussed in Section~\ref{hyperparameter estimation}. In Section~\ref{sec:numres} numerical experiments showing the effectiveness of the proposed method are presented. Finally in Section~\ref{sec:conclusion} we draw the conclusions.

{\em Notation.} $\Nbb$ denotes the set of   \fix{natural numbers}, $\Rbb$ denotes the set of real numbers and $\mathbb{S}$ denotes the space of all sequences. Given an infinite dimensional matrix $\Kcal\in\Rbb^{\infty\times \infty}$, $\Kcal^\top$ denotes the transpose of $\Kcal$, $[\Kcal]_{ij}$ denotes its entry in position $(i,j)$. The definitions are similar in the case in which we consider finite dimensional matrices. We denote with $\ell^2$ the  Hilbert space of absolutely square summable infinite length sequences, namely if $\vartheta\in\ell^2$, \fix{$\norm{\vartheta}^2_2=\sum_{k=1}^\infty\vartheta_k^2<\infty$}, and with $\ell^1$ the vector space of absolutely summable infinite length sequences  namely if $\vartheta\in\ell^1$, \fix{$\norm{\vartheta}_1=\sum_{k=1}^\infty|\vartheta_k|<\infty$}. The notation $y\sim \mathcal N(\mu,K)$ means that $y$ is a Gaussian random vector with mean $\mu$ and covariance matrix $K$. 

\section{Problem Formulation} \label{sec:prob_form}
Consider the nonparametric Single Input Single Output (SISO) linear forward model 
\begin{align}\label{def_mod_inf}
    y(t)=\sum_{k=1}^{\infty} b_k u(t-k)+\sum_{k=0}^\infty c_k e(t-k),
\end{align}
where $c_0=1$; $y(t),u(t)$ denote the output and the input of the system at time $t$, respectively; $e(t)$ is White Gaussian Noise (WGN) with variance $\sigma^2$. The input can be deterministic or a stochastic process independent of $e$. We assume that the system is BIBO stable that is the corresponding impulse responses are absolutely summable, i.e.
\begin{align*}
   b =\begin{bmatrix}
       b_1\\
        b_2\\
        \vdots
    \end{bmatrix}\in \ell^1, \quad     c =\begin{bmatrix}
        c_1\\
        c_2\\
        \vdots
    \end{bmatrix}\in \ell^1.
\end{align*}
Assume to collect a dataset $\Dcal:=\{(y(t),u(t)),\; t=1,\, \ldots,\, N\}$ generated by 
(\ref{def_mod_inf}). We want to estimate the impulse responses characterizing model (\ref{def_mod_inf})  that is $\vartheta=(b ,c )$ from $\Dcal$. We denote with
\begin{align*}
\mathrm u^-&:=[\, u(0)\; u(-1) \,\ldots \,]^\top,\\
\mathrm y^-&:=[\, y(0)\; y(-1) \,\ldots \,]^\top,
\end{align*}
the (infinite length) past data vector of the input and output, respectively. We also define the vectors of the collected data as 
\begin{align*}
\mathrm u^+&:=[\, u(1)\; u(2) \,\ldots\, u(N)\,]^\top,\\
\mathrm y^+&:=[\, y(1)\; y(2) \,\ldots\, y(N)\,]^\top.\end{align*}
Let
\begin{align*}
    B(z)=\sum_{k=1}^{\infty}b_kz^{-k},\quad C(z)=1+\sum_{k=1}^{\infty}c_kz^{-k}
\end{align*}
denote the Z-transform of the impulses $b $ and $c $. It is not difficult to see that the one-step ahead predictor of $y(t)$ under model (\ref{def_mod_inf}) is 
\begin{align} \label{hatytheta}
    \hat{y}_\vartheta(t|t-1)=[
    1-C(z)^{-1}]y(t)+\frac{B(z)}{C(z)}u(t)
\end{align}
and the corresponding prediction error is  
\begin{align}\label{eq:pred_err}
    \varepsilon_\vartheta(t)=\frac{1}{C(z)}\left[y(t)-B(z)u(t)\right],
\end{align}
where we made explicit its dependence on $\vartheta$. One would find $\vartheta$ by minimizing the average squared prediction error
\begin{align}\label{eq:Vn}
    V_N(\vartheta)=\frac{1}{N}\sum_{t=1}^N\varepsilon_{\vartheta}^2(t).
\end{align}
However such a problem is ill-posed because we have a finite number of measurements while $\vartheta$ contains infinite parameters.
To overcome this issue in the estimation of $\vartheta$, we consider the kernel-based approach proposed in \cite{PILLONETTO_DENICOLAO2010}, i.e., $\vartheta$ is obtained solving the Tikhonov regularization problem 
\begin{align}\label{eq:tikhonv_problem}
    \min_{\vartheta \in \Hcal_b\times\Hcal_c}{V_N(\vartheta)+\gamma_b\norm{b }^2_{\Hcal_b}+\gamma_c\norm{c }^2_{\Hcal_c}},
\end{align}
where $\Hcal_b,\Hcal_c\subseteq \ell^2$ are the Reproducing Kernel Hilbert Spaces (RKHS) with inner products $\innerprod{\cdot}{\cdot}_{\Hcal_b}$, $\innerprod{\cdot}{\cdot}_{\Hcal_c}$ and norms $\norm{\cdot}_{\Hcal_b}$, $\norm{\cdot}_{\Hcal_c}$, respectively. The corresponding kernels are denoted by $\Kcal_b$, $\Kcal_c$, while $\gamma_b=\frac{\sigma^2}{N}\frac{1}{\lambda_b}$, $\gamma_c=\frac{\sigma^2}{N}\frac{1}{\lambda_c}$ are the scale factors corresponding to the hyperparameters $\lambda_b,\lambda_c>0$.
The most popular kernel used in system identification is the 
     Tuned Correlated (TC) kernel \citep{EST_TF_REVISITED_2012}
\begin{equation}
 \label{defTC}   [\Kcal_{TC}]_{ij}=\beta^{max\{i,j\}},
\end{equation}
where $\beta\in(0,1)$ is the hyperparameter describing the decay rate of the impulse response. A more general kernel is the Second order Diagonal correlated (D2) kernel \citep{zorzi2021second}
\begin{align}
    \label{dDC2}
     [\Kcal_{D2}]_{ij}=\frac{1}{1-\alpha}&\left[\beta^{max\{i,j\}}\left(1-\left(1-\beta\right)\alpha^{\left(\left|i-j\right|+1\right)}\right)\right.\nn\\
    &\ \left.-\alpha^2\beta^{max\{i,j\}+1}\right], 
    \end{align}
where the hyperparameter $\alpha\in [0,1)$. These kernels encode the a priori information that the impulse response decays to zero and has a certain degree of smoothness. More precisely, the D2 kernel induces more smoothness on the impulse response than the TC kernel: the smoothness degree is tuned according to $\alpha$. If more smoothness is needed, then it is possible to use the Stable-Spline (SS) kernel \citep{PILLONETTO_DENICOLAO2010} or the second order TC (TC2) kernel \citep{zorzi2021second}.

\begin{remark} The typical way to estimate model (\ref{def_mod_inf}) is to re-parametrize it as an infinite order ARX model 
\alg{\label{par_inf_ARX}y(t)=\sum_{k=1}^\infty a_k y(t-k)+\sum_{k=1}^{\infty} f_k u(t-k)+ e(t)} 
and then consider the corresponding Tikhonov regularization problem with impulse responses $a=[\,a_1 \; a_2 \; \ldots \,]^\top\in\ell^1$, $f=[\,f_1 \; f_2 \; \ldots \,]^\top\in\ell^1$ of the one-step ahead predictor \citep{PILLONETTO_2011_PREDICTION_ERROR}. However, this approach does not allow to embed the a priori information on the impulse responses $b,c$ characterizing the forward model (\ref{def_mod_inf}).
\end{remark}

It is worth noting that the solution to \eqref{eq:tikhonv_problem} can be interpreted as the Maximum a Posteriori (MAP) estimator of the following Bayesian model: $b$ and $c$ are modeled as independent Gaussian random processes with zero mean and covariance functions $\lambda_b\Kcal_b$ and $ \lambda_c\Kcal_c$, respectively. Let $\eta$ be the deterministic vector of hyperparameters, i.e. the one containing $\lambda_b$, $\lambda_c$ and the hyperparameters characterizing $\Kcal_b$, $\Kcal_c$. The MAP estimator maximizes $\pb(\mathrm y^+,\vartheta|\mathrm y^-,\eta,\sigma^2)$ which denotes joint probability of $\mathrm y^+$ and $\vartheta$ given $\mathrm y^-$, $\eta$ and $\sigma^2$. Assuming that the past data $\mathrm y^-$ does not affect the a priori probability on $\vartheta$,  i.e., $\pb(\theta| { \mathrm y}^{-},\eta,\sigma^2)=\pb(\theta|\eta,\sigma^2)$, we have
\begin{align}
    &\pb(\mathrm y^+,\vartheta|\mathrm y^-,\eta,\sigma^2)=\pb(\mathrm y^+|\vartheta,\mathrm y^-,\eta,\sigma^2)\pb(\vartheta|\eta,\sigma^2)\nn\\ 
    & =\frac{1}{{\kappa}} e^{-\frac{1}{2}\left(\frac{1}{\lambda_b}\norm{b}_{\Hcal_b}^2+\frac{1}{\lambda_c}\norm{c}_{\Hcal_c}^2\right)}
    \prod_{t=1}^N\frac{1}{\sqrt{2\pi\sigma^2}}e^{-\frac{1}{2}\frac{\varepsilon_\vartheta(t)^2}{\sigma^2}}\label{eq:probabilityMAP}
\end{align}
where $\kappa$ is the normalization constant. Thus, the maximization of \eqref{eq:probabilityMAP} with respect to $\vartheta$ is equivalent to problem \eqref{eq:tikhonv_problem}. Kernel-based estimators, of both linear and nonlinear dynamic models, proposed in the literature, e.g. \cite{romeres2019derivative,dalla2021kernel,darwish2018prediction}, lead to a linear infinite dimensional Tikhonov regularization problem, i.e. the prediction error $\varepsilon_\vartheta$ depends linearly on $\vartheta$, whose existence of a solution is a well established result \citep{ARONSZAJN1950}. However, the same conclusion does not apply to problem \eqref{eq:tikhonv_problem} because it is a nonlinear infinite dimensional Tikhonov regularization problem, i.e. $\varepsilon_\vartheta$ in (\ref{eq:pred_err}) is not linear with respect to $\vartheta$. Moreover, the hyperparameters vector $\eta$ is learned from the data optimizing the marginal likelihood function \citep{RASMUSSEN_WILLIAMNS_2006}. Due to the nonlinear nature of problem \eqref{eq:tikhonv_problem}, such function does not admit an explicit form. The aim of the next sections is to show that problem \eqref{eq:tikhonv_problem} admits solution and thus it is also possible to derive a procedure which approximates the optimization of the marginal likelihood for finding the optimal hyperparameters.

\section{Existence of the solution}\label{sec:exist_sol}
In this section we prove that Problem \eqref{eq:tikhonv_problem} admits solution, showing that \eqref{eq:tikhonv_problem} is equivalent to minimizing a lower semi-continuous functional over a compact set.
\begin{definition}
    Let $\Hcal$ be an Hilbert space with inner product $\innerprod{\cdot}{\cdot}_\Hcal$. Given a sequence $c ^{(n)}\in \Hcal$, with $n\in \Nbb$, we say that $c ^{(n)}$ weakly converges to $c \in\Hcal$, and we denote it $c ^{(n)}\rightharpoonup c $, when
    \begin{align*}
        \lim_{n\to\infty}\innerprod{ v}{c ^{(n)}}_\Hcal=\innerprod{ v}{c }_\Hcal,\quad \forall v \in\Hcal.
    \end{align*}
\end{definition}
We define the Cartesian product of the two Hilbert spaces as $\Hcal=\Hcal_b\times\Hcal_c$, which is still an Hilbert Space  with inner product 
\begin{align*}
    \innerprod{(b _1,c _1)}{(b _2,c _2)}_{\Hcal}={\gamma_b}\innerprod{ b _1}{b _2}_{\Hcal_b}+{\gamma_c}\innerprod{ c_1}{c_2}_{\Hcal_c}. 
\end{align*}
\begin{definition}\citep{chen2018stability}
    Let $\Hcal$ be the RKHS induced by the kernel $\mathcal{K}$. $\mathcal{K}$ is said to be stable if $\Hcal \subset \ell^1$.
\end{definition}
 Notice that, all the kernels mentioned in the previous section are stable since their main diagonal decays exponentially to zero. 
Thus,  the necessary and sufficient condition in \cite{carmeli2006vector} for which a kernel is stable  is satisfied.

\begin{assumption}\label{ass.stab_ker}
    The kernel functions $\mathcal{K}_b,\mathcal{K}_c$ are stable.
\end{assumption}

\begin{lemma}\label{lem:compact_ball}
 Consider the balls
    \begin{align*}
        &\Bcal_b=\{b \in\Hcal_b:\norm{b }^2_{\Hcal_b}\leq r_b\}\\
        &\Bcal_c=\{c \in\Hcal_c:\norm{c }^2_{\Hcal_c}\leq r_c\}
    \end{align*}
    where $r_b,r_c>0$ are finite. Under Assumption \ref{ass.stab_ker} the Cartesian product $\Bcal_b\times \Bcal_c$ is compact with respect to the weak topology induced by $\Hcal=\Hcal_b\times \Hcal_c$.
\end{lemma}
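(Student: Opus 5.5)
The plan is to reduce the statement to the Banach--Alaoglu theorem in Hilbert spaces, or equivalently to the fact that closed, bounded, convex subsets of a Hilbert space are weakly compact. Assumption~\ref{ass.stab_ker} plays no essential role in the compactness itself. It only guarantees that $\Hcal_b,\Hcal_c$ are genuine Hilbert spaces of sequences contained in $\ell^1$, which is what later sections need in order to work with the impulse responses. The argument therefore takes place entirely inside the Hilbert space $\Hcal=\Hcal_b\times\Hcal_c$ equipped with the weighted inner product defined above.

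\emph{Step 1: boundedness.} I would show that $\Bcal_b\times\Bcal_c$ is a bounded subset of $\Hcal$. For $(b,c)$ in the product we have
\begin{align*}
\norm{(b,c)}_\Hcal^2=\gamma_b\norm{b}^2_{\Hcal_b}+\gamma_c\norm{c}^2_{\Hcal_c}\leq \gamma_b r_b+\gamma_c r_c=:R^2 .
\end{align*}
Hence the product lies in the closed ball of radius $R$ in $\Hcal$.

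\emph{Step 2: convexity and strong closedness.} Each $\Bcal_b$, $\Bcal_c$ is a closed ball in its own Hilbert space, and therefore convex and norm-closed. The product of convex sets is convex. The product is also norm-closed in $\Hcal$, because the norm of $\Hcal$ is equivalent to the product norm ($\gamma_b,\gamma_c>0$). By Mazur's theorem, a convex norm-closed set is weakly closed.

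\emph{Step 3: conclusion.} A Hilbert space is reflexive, so its closed ball of radius $R$ is weakly compact; this is Banach--Alaoglu combined with the Riesz identification of $\Hcal$ with its dual. A weakly closed subset of a weakly compact set is weakly compact, so $\Bcal_b\times\Bcal_c$ is weakly compact.

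Since the paper works with sequences, it will later also need sequential compactness. Here I would either invoke the Eberlein--\v{S}mulian theorem, or argue directly. Every bounded sequence in a Hilbert space has a weakly convergent subsequence: restrict to the closed span of the sequence, which is separable, and use a diagonal argument on a countable dense set. The limit stays in the product by weak lower semicontinuity of the norm, $\norm{b}_{\Hcal_b}\le\liminf_n\norm{b^{(n)}}_{\Hcal_b}$, and likewise for $c$. One more point needs checking: weak convergence in $\Hcal$ is equivalent to componentwise weak convergence in $\Hcal_b$ and $\Hcal_c$. This follows by testing against vectors $(v,0)$ and $(0,w)$.

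The main obstacle is conceptual rather than technical. One has to make sure the weak topology of the weighted product inner product coincides with the product of the weak topologies, and be clear about which role the stability assumption plays. Both points are settled by the equivalence of norms and by the componentwise test vectors described above.
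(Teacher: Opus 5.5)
Your argument is correct, but you obtain compactness in a different place than the paper does.

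\textbf{The two routes.} The paper applies Banach--Alaoglu (via reflexivity) to each ball $\Bcal_b\subset\Hcal_b$ and $\Bcal_c\subset\Hcal_c$ separately, and then uses Tychonoff's theorem on the product. You work directly in $\Hcal$ with its weighted inner product. You show that $\Bcal_b\times\Bcal_c$ is bounded, convex and norm-closed, get weak closedness from Mazur, and conclude that it is a weakly closed subset of the weakly compact ball of radius $(\gamma_b r_b+\gamma_c r_c)^{1/2}$.

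\textbf{What each buys.} The paper's route is shorter, since Tychonoff for two factors is elementary. However, Tychonoff delivers compactness in the product of the weak topologies of $\Hcal_b$ and $\Hcal_c$, and the paper leaves tacit the identification of this with the weak topology of $\Hcal$. Your route gives compactness in the weak topology of $\Hcal$ directly, as the statement asks. Your test vectors $(v,0)$ and $(0,w)$ also supply that identification, which is needed anyway when weak convergence in $\Hcal$ is turned into $c^{(n)}\rightharpoonup c$ in $\Hcal_c$ in Lemma~\ref{lem:ch_convergence} and Proposition~\ref{prop:finite_sum}.

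\textbf{Sequential compactness.} Your remark here (Eberlein--\v{S}mulian, or the diagonal argument plus weak lower semicontinuity of the norm) is a genuine addition. Lemma~\ref{prop:lower_semicon} and Proposition~\ref{prop:finite_sum} are proved with sequences, so weak sequential compactness is what the existence argument of Theorem~\ref{th:thikhonov_solution} actually pairs with. The paper does not address this.

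You are also right that Assumption~\ref{ass.stab_ker} plays no role; the paper's proof likewise uses only the reflexivity of Hilbert spaces.
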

\begin{pf} We prove that the two balls are closed with respect to the weak topology.  Since any RKHS is a reflexive space, we have that $\mathcal H_b$ is a reflexive space. By the Banach-Alaoglu Theorem \citep[3.17 page 222]{zeidler1995applied} it follows that $\Bcal_{b}$ is compact in the weak topology induced by $\Hcal_b$. In a similar way, it is possible to prove that $\Bcal_c$ is compact in the weak topology induced by $\Hcal_c$. By the Tychonoff Theorem \cite[page 234]{munkres2000topology},  the Cartesian product $\Bcal_b\times\Bcal_c$ is compact in the weak topology induced by $\Hcal_b\times\Hcal_c$. \hspace*{\fill} \hspace*{\fill} \qed  
\end{pf}
\begin{lemma}\label{prop:lower_semicon}
    Consider the RKHS $\mathcal S\subset \ell^2\times\ell^2$ equipped with the induced norm 
  \begin{align*}
       \norm{(b, c)}^2_{\mathcal S}=\norm{b}^2_{\Xcal_b^{-1}}+\norm{c}^2_{\Xcal_c^{-1}}
    \end{align*}
    where $\Xcal_b,\Xcal_c\!\in\!\Rbb^{\infty\times\infty}$ are strictly positive kernels (in the sense of Moore)\footnote{Recall that a kernel $\mathcal K\in\Rbb^{\infty \times \infty}$ is strictly positive if  for any set of reals $\{c_1,\, \ldots,\, c_n\}$, with $n\in \Nbb$, such that at least one element is different from zero, we have $\sum_{i,j=1}^n c_ic_j [\mathcal K]_{ij}>0$.} and  \mbox{$\norm{b}^2_{\Xcal_b^{-1}}=b^\top\Xcal_b^{-1}b$},  \mbox{$\norm{c}^2_{\Xcal_c^{-1}}=c^\top\Xcal_c^{-1}c$}  with $\Xcal_{b}^{-1}$ and $\Xcal_c^{-1}$ denoting the \fix{ inverse matrix\footnote{\fix{
    Note that, $\Xcal_b$ corresponds to the linear operator (which is a  bijection for the stable kernels used in system identification)  \alg{T\;:\; &\ell^\infty\rightarrow \mathrm{Range}[T]\nn\\
  & b\mapsto \Xcal_b b\nn} 
  where $\ell^\infty$ denotes the space of bounded sequences, while  $\Xcal_b^{-1}$ corresponds to the inverse operator of $T$ \alg{T^{-1}\;:\; & \mathrm{Range}[T]\rightarrow \ell^\infty\nn\\
  & v\mapsto \Xcal_b^{-1} v.\nn}}}} of $\Xcal_{b}$ and $\Xcal_c$, respectively.
    Let \begin{align*}
    \begin{array}{clcl}
        g:\ &{ \Scal}\to \Rbb\\
        &(b,c)\mapsto\norm{b}^2_{\Xcal_b^{-1}}+\norm{c}^2_{\Xcal_c^{-1}}+b^\top v_b+c^\top v_c,
    \end{array}
    \end{align*}
    with  $(v_b,v_c)$ such that $(\Xcal_bv_b,\Xcal_cv_c)\in\Scal$.
    Then, $g$ is lower-semicontinuous with respect to the weak topology induced by $\Scal$.
\end{lemma}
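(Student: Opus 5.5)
Since $\Scal$ is the RKHS with kernel $\mathrm{diag}(\Xcal_b,\Xcal_c)$, its inner product is $\innerprod{(b_1,c_1)}{(b_2,c_2)}_{\Scal}=b_1^\top\Xcal_b^{-1}b_2+c_1^\top\Xcal_c^{-1}c_2$. The first two terms of $g$ are therefore exactly $\norm{(b,c)}^2_{\Scal}$, and I will write $g$ as the sum of this squared norm and a linear functional. I will show that the linear part is weakly continuous and that the squared norm is weakly lower semicontinuous. Since the sum of a lower semicontinuous function and a continuous function is lower semicontinuous, the claim follows.

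\emph{Step 1 (linear part).} Put $w:=(\Xcal_bv_b,\Xcal_cv_c)$, which belongs to $\Scal$ by hypothesis. For any $(b,c)\in\Scal$,
\begin{align*}
\innerprod{(b,c)}{w}_{\Scal}=b^\top\Xcal_b^{-1}\Xcal_bv_b+c^\top\Xcal_c^{-1}\Xcal_cv_c=b^\top v_b+c^\top v_c .
\end{align*}
This uses the bijectivity of $T$ described in the footnote, so that $\Xcal_b^{-1}\Xcal_b v_b=v_b$, and likewise for $c$. Hence $(b,c)\mapsto b^\top v_b+c^\top v_c$ is the Riesz functional $\innerprod{\cdot}{w}_{\Scal}$. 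By the definition of weak convergence it is continuous in the weak topology of $\Scal$.

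\emph{Step 2 (norm part).} I will show that $x\mapsto\norm{x}^2_{\Scal}$ is weakly lower semicontinuous. The norm is the supremum of the weakly continuous functionals $x\mapsto\innerprod{x}{u}_{\Scal}$ over $\norm{u}_{\Scal}\le 1$. A supremum of continuous functions is lower semicontinuous, so the norm is weakly lower semicontinuous. Composing with the nondecreasing continuous map $s\mapsto s^2$ on $[0,\infty)$ preserves lower semicontinuity. In sequential form the same argument reads: if $x^{(n)}\rightharpoonup x$, then $\norm{x}^2_{\Scal}=\lim_n\innerprod{x^{(n)}}{x}_{\Scal}\le\liminf_n\norm{x^{(n)}}_{\Scal}\norm{x}_{\Scal}$.

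\emph{Step 3 (conclusion).} Adding the two parts shows that $g$ is weakly lower semicontinuous on $\Scal$.

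The main obstacle is Step 1. One has to justify that the formal infinite products $b^\top\Xcal_b^{-1}\Xcal_bv_b$ are well defined and equal $b^\top v_b$, i.e.\ that the associativity manipulations with infinite matrices are legitimate. I would handle this through the operator interpretation in the footnote: $v_b\in\ell^\infty$, $\Xcal_bv_b\in\mathrm{Range}[T]$, and $T^{-1}T=\mathrm{id}$. With that, $b^\top v_b$ is read as the $\Scal$-inner product of $b$ with the representer $\Xcal_bv_b$, which is exactly how the norm $\norm{\cdot}_{\Xcal_b^{-1}}$ is defined on $\Scal$.
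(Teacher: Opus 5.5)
Your proposal is correct and follows essentially the same route as the paper: both write $g(b,c)=\norm{(b,c)}^2_{\Scal}+\innerprod{(b,c)}{w}_{\Scal}$ with $w=(\Xcal_b v_b,\Xcal_c v_c)$, and both combine weak continuity of the linear term with weak lower semicontinuity of the squared norm. The only difference is in that last step: the paper expands $\norm{(b^{(n)},c^{(n)})-(b,c)}^2_{\Scal}\ge 0$ along a weakly convergent sequence, whereas your supremum-of-linear-functionals argument (with a proper $\liminf$) gives lower semicontinuity in the weak topology itself rather than only sequentially.
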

\begin{pf}
Let $w_b=\Xcal_b v_b$ and $w_c=\Xcal_c v_c$, $w=(w_b,w_c)$. Then, notice that { $g(b,c)=\norm {(b,c)}^2_{\Scal}+\innerprod{(b,c)}{w}_\Scal$.} Take a sequence { $( b ^{(n)}, c ^{(n)})\in \Scal$}, with $n\in\Nbb$, such that $( b ^{(n)}, c ^{(n)})\rightharpoonup( b , c )\in { \Scal}$. 
   Consider
    \begin{align*}
        \norm{( b ^{(n)}, c ^{(n)})-( b , c )}^2_{ \Scal}\geq 0,
    \end{align*}
which implies
\begin{align*}
        \norm{ ( b ^{(n)}, c ^{(n)})}^2_{ \Scal}+ \norm{( b , c )}^2_{ \Scal}- 2\innerprod{  ( b ^{(n)}, c ^{(n)})}{( b , c )}_{ \Scal}\geq 0,
    \end{align*}
    and hence
    \begin{align*}
        \norm{ ( b ^{(n)}, c ^{(n)})}^2_{ \Scal}\geq 2\innerprod{  ( b ^{(n)}, c ^{(n)})}{( b , c )}_{ \Scal}-\norm{( b , c )}^2_{ \Scal}.
    \end{align*}
    By taking the limit in both sides, and adding the term $\innerprod{(b^{(n)},c^{(n)})}{w}_{ \Scal}$, we obtain
    \begin{align*}
        &\lim_{n\to\infty}{\norm{ ( b ^{(n)}, c ^{(n)})}^2_{ \Scal}}\!+\!\innerprod{(b^{(n)},c^{(n)})}{\!\!w}_{ \Scal}\\
        &\geq\!\!\!\lim_{n\to\infty}\!2{\innerprod{  ( b ^{(n)}\!, c ^{(n)})}{\!\!( b, c )}_{ \Scal}}\!+\!\innerprod{(b^{(n)}\!,c^{(n)})}{\!\!w}_{\Scal}\!-\!\norm{( b , c )}^2_{ \Scal}\!,
    \end{align*}
      \begin{align*}
        \lim_{n\to\infty}&{\norm{ ( b ^{(n)}\!, c ^{(n)})}^2_{ \Scal}}+\innerprod{(b^{(n)}\!,c^{(n)})}{\!\!w}_{ \Scal}\\
        &\!\geq\!2{\innerprod{( b, c )}{( b, c )}}_{ \Scal}+\innerprod{(b,c)}{w}_{ \Scal}-\norm{( b , c )}^2_{ \Scal},
    \end{align*}
    and hence
         \begin{align*}
        \lim_{n\to\infty}&{\norm{ ( b ^{(n)}, c ^{(n)})}^2_{ \Scal}}+\innerprod{(b^{(n)},c^{(n)})}{\!\!w}_{ \Scal}\\
        &\geq \norm{( b , c )}^2_{ \Scal}+\innerprod{(b,c)}{w}_{ \Scal},
    \end{align*}  
    i.e. $g$ is lower-semicontinuous.\hspace*{\fill} \qed 
\end{pf}
 Before to prove the continuity of the functional in \eqref{eq:Vn}, we need the following two lemmas.
\begin{lemma}\label{lem:invers_def}
    Let
\begin{align}\label{eq:inv_c}
\begin{array}{cccc}
      f:\ &\ell^2 &\to &{\mathbb{S}}\\
          &c  &\mapsto &h 
\end{array}
\end{align}
be the operator for which 
\begin{align}\label{eq:hk}
        h_k=-c_k-\sum_{i=1}^{k-1}c_ih_{k-i},\quad k\in\Nbb.
    \end{align}
Define 
\begin{align*}
 H(z)=1+\sum_{k=1}^{\infty}h_kz^{-k},  \quad \ C(z)=1+\sum_{k=1}^{\infty}c_kz^{-k}.   
\end{align*} Then, $C(z)H(z)=1$. 
\end{lemma}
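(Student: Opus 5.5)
The identity $C(z)H(z)=1$ will be read as an identity of formal power series in $z^{-1}$, i.e.\ in the ring $\mathbb{R}[[z^{-1}]]$. Both $C$ and $H$ are well defined there: $c\in\ell^2$ is a sequence, and $h=f(c)$ is a sequence in $\mathbb{S}$ obtained by the recursion \eqref{eq:hk}. The plan is to compute the Cauchy product coefficient by coefficient and check that every coefficient of $z^{-k}$ with $k\ge 1$ vanishes. No convergence issue arises, because each coefficient of a product of formal series is a finite sum. This is also why the statement takes values in $\mathbb{S}$ rather than in $\ell^2$ or $\ell^1$.

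First I check that $f$ is well defined. The recursion \eqref{eq:hk} determines $h_k$ by induction on $k$: $h_1=-c_1$, and $h_k$ depends only on $c_1,\ldots,c_k$ and on $h_1,\ldots,h_{k-1}$, which are already defined. So $h$ is a uniquely determined real sequence.

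Next I set $c_0=h_0=1$ and write the product as
\begin{align*}
C(z)H(z)=\sum_{k=0}^\infty \Big(\sum_{i=0}^{k} c_i h_{k-i}\Big) z^{-k}.
\end{align*}
For $k=0$ the coefficient is $c_0h_0=1$. For $k\ge1$ I split off the terms $i=0$ and $i=k$:
\begin{align*}
\sum_{i=0}^{k} c_i h_{k-i}=h_k+c_k+\sum_{i=1}^{k-1}c_ih_{k-i}.
\end{align*}
This equals zero exactly by \eqref{eq:hk}. Hence $C(z)H(z)=1$.

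The only delicate point is interpretive. If one wanted the identity to hold for complex values of $z$ rather than formally, $H$ would need to converge there, which is not guaranteed for a general $c\in\ell^2$. I would therefore state explicitly that the identity is formal, meaning that $H$ is the formal inverse of $C$, which exists because $C$ has leading coefficient $c_0=1$. I expect that later results will add conditions (such as $c\in\ell^1$ and $C$ being minimum phase) under which $h$ is summable and the identity also holds analytically for $|z|\ge1$.
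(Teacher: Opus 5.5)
Your proof is correct and is essentially the paper's argument: set $c_0=h_0=1$, expand the Cauchy product coefficient by coefficient, and observe that for $k\ge 1$ the coefficient $h_k+c_k+\sum_{i=1}^{k-1}c_ih_{k-i}$ vanishes by \eqref{eq:hk}. The paper writes $c_k$ as the $i=k$ term of the sum using $h_0=1$, which amounts to the same thing. Your explicit formal-power-series reading is a useful clarification and agrees with the paper's later remark that $h_k$ may diverge because $C(z)$ need not be minimum phase. Contrary to your closing guess, though, the paper never adds such conditions; it relies instead on $V_N$ depending only on $h_0,\ldots,h_{N+M}$.
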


\begin{pf}
We define $c_0=h_0=1$. We have to prove that 
    \begin{align*}
        C(z)H(z):=\sum_{i,j=0}^{\infty}c_ih_jz^{-(i+j)}=1.
    \end{align*}
    Let define $k:=i+j$. Then it follows that $j=k-i$, and hence 
    \begin{align*}
        C(z)H(z)&=\sum_{k=0}^{\infty}\sum_{i=0}^{k}c_ih_{k-i}z^{-k}\\
        &=c_0h_0+\sum_{k=1}^\infty\left(c_0h_{k}z^{-k}+\sum_{i=1}^{k} c_ih_{k-i}z^{-k}\right)\\
           &=1+\sum_{k=1}^\infty\left(h_{k}+\sum_{i=1}^{k} c_ih_{k-i}\right)z^{-k}\\
            &=1+\sum_{k=1}^\infty\left(-{\sum_{i=1}^{k} c_ih_{k-i}}+{\sum_{i=1}^{k} c_ih_{k-i}}\right)z^{-k}=1
    \end{align*}
    where in the last equality we exploited the definition of $h_k$ in \eqref{eq:hk}. \hspace*{\fill} \qed
\end{pf}

\begin{lemma}\label{lem:ch_convergence}
    Let $c ^{(n)}\in\Hcal_c$,  with $n\in\Nbb$, such that $c ^{(n)}\rightharpoonup c \in\Hcal_c$. Let $h ^{(n)}=f(c ^{(n)})$ and $h =f(c )$, where $f(\cdot)$ has been defined in \eqref{eq:inv_c}. Then, for any $k\in\Nbb$
    \begin{subequations}
    \begin{align}&\lim_{n\to \infty}c_k^{(n)}=c_k,\\ &\lim_{n\to \infty}h_k^{(n)}=h_k\label{eq:hk_lim}.\end{align}
    \end{subequations}
    \end{lemma}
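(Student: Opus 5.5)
The first claim, pointwise convergence of the coefficients, follows from the reproducing property, and the second follows from the first by induction on $k$ through the recursion \eqref{eq:hk}.

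\emph{Step 1: coefficient convergence.} Since $\Hcal_c$ is an RKHS of sequences with kernel $\Kcal_c$, for each fixed $k\in\Nbb$ the $k$-th column $\Kcal_c(\cdot,k)$ belongs to $\Hcal_c$. The reproducing property gives $c_k=\innerprod{\Kcal_c(\cdot,k)}{c}_{\Hcal_c}$ for every $c\in\Hcal_c$; equivalently, evaluation at $k$ is a bounded linear functional on $\Hcal_c$. Taking $v=\Kcal_c(\cdot,k)$ in the definition of weak convergence yields
\begin{align*}
\lim_{n\to\infty}c_k^{(n)}=\lim_{n\to\infty}\innerprod{\Kcal_c(\cdot,k)}{c^{(n)}}_{\Hcal_c}=\innerprod{\Kcal_c(\cdot,k)}{c}_{\Hcal_c}=c_k .
\end{align*}
Note that $\Hcal_c\subseteq\ell^2$, so $f(c^{(n)})$ and $f(c)$ are well defined.

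\emph{Step 2: convergence of $h$.} By \eqref{eq:hk}, $h_k$ is a polynomial in $c_1,\dots,c_k$, so it is a continuous function of finitely many coordinates. To make this explicit, I argue by strong induction on $k$.
\begin{itemize}
\item Base case: $h_1=-c_1$, so $h_1^{(n)}=-c_1^{(n)}\to -c_1=h_1$ by Step 1.
\item Inductive step: assume $h_j^{(n)}\to h_j$ for all $j<k$. Then
\begin{align*}
h_k^{(n)}=-c_k^{(n)}-\sum_{i=1}^{k-1}c_i^{(n)}h_{k-i}^{(n)}
\end{align*}
is a finite sum of terms, each of which converges: the $c_i^{(n)}$ by Step 1 and the $h_{k-i}^{(n)}$ by the induction hypothesis. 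Since limits of finite sums and products of convergent real sequences are the sums and products of the limits, $h_k^{(n)}\to -c_k-\sum_{i=1}^{k-1}c_ih_{k-i}=h_k$.
\end{itemize}
This proves \eqref{eq:hk_lim}.

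The only non-routine point is Step 1: one must observe that weak convergence in an RKHS implies pointwise convergence because point evaluations are continuous. Step 2 requires no uniformity in $k$, since the claim is only pointwise for each fixed $k$; this is why the whole sequence $h$ need not lie in any particular space.
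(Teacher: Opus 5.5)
Your proposal is correct and follows the paper's proof essentially step for step. You get pointwise convergence of $c_k^{(n)}$ from the reproducing property by testing weak convergence against the $k$-th kernel column $\Kcal_c\mathbf{e}_k$, and then obtain $h_k^{(n)}\to h_k$ by induction on $k$ through the recursion \eqref{eq:hk}.
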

\begin{pf}
Since $c^{(n)}\rightharpoonup c$, we have that
\begin{align*}
    \lim_{n\to \infty}\innerprod{ c ^{(n)}}{v}_{\Hcal_c}=\innerprod{c}{v}_{\Hcal_c}, \quad \forall v \in\Hcal_c.
\end{align*}
We take $v =\Kcal_c\mathbf{e}_k\in\Hcal_c$, where $\mathbf{e}_k$ is the infinite dimensional canonical vector, whose $k$-th component is equal to $1$ and the remaining ones are equal to $0$. It follows that
\begin{align}\label{eq:lim}
    \lim_{n\to\infty}c_k^{(n)}&=\lim_{n\to\infty}\innerprod{ c ^{(n)}}{\Kcal_c\mathbf{e}_k}_{\Hcal_c}\nn\\
    &= \innerprod{ c}{\Kcal_c\mathbf{e}_k}_{\Hcal_c}=c_k,
\end{align}
 where $c^{(n)}_k$, with $n\in\Nbb$, $k\in\Nbb$, is the $k$-th element of the sequence $c^{(n)}$ and takes values in 
 $\Rbb$. Hence, the first claim holds. 
We prove the second claim by induction. For $k=1$, by Lemma \ref{lem:invers_def}, it follows that
\begin{align*}
    \lim_{n\to\infty}h_1^{(n)}=\lim_{n\to\infty}-c_1^{(n)}=-c_1=h_1
\end{align*}
where we exploited the fact that $c_1^{(n)}$ approaches $c_1$ as $n\to \infty$ by \eqref{eq:lim}.
For $k>1$, suppose that \begin{align*}
\lim_{n\to\infty}h_{i}^{(n)}=h_{i},\quad  i=1,\,\dots,\,k-1.\end{align*} We want to prove that $\lim_{n\to\infty}h_{k}^{(n)}=h_{k}$.
By  \eqref{eq:hk}, $h_k^{(n)}$ is defined as
\begin{align*}
    h_{k}^{(n)}=-\sum_{i=1}^{k}c^{(n)}_ih^{(n)}_{k-i}.
\end{align*}
 Notice that $h_k^{(n)}$ is a finite sum of terms obtained as product of a finite number of terms of the type $h_i^{(k)}$ with $i=1,\,\dots,\,k-1$,  $c_i^{(n)}$ with $i=1,\,\dots,\,k$, and the latter converge to finite values by \eqref{eq:lim}. 
We conclude that
    \begin{align*}
    \lim_{n\to\infty}h_{k}^{(n)}=-\sum_{i=1}^{k}c_ih_{k-i}=h_{k}
\end{align*}
where in the last equality we exploited \eqref{eq:hk}.
So, we can conclude that also \eqref{eq:hk_lim} holds.\hspace*{\fill} \qed 
\end{pf}
For the subsequent analysis we introduce the following assumption:
\begin{assumption}\label{ass.i.c.}
    There exists $M\in\mathbb{N}$ for which the initial conditions $\mathrm y^-$ and $\mathrm u^-$ are such that $y(s)=u(s)=0$, for all $s<-M$.
\end{assumption}
\begin{proposition}\label{prop:finite_sum}
   Under Assumption \ref{ass.i.c.}, the average squared prediction error $V_N(\vartheta )$ is continuous with respect to the weak topology induced by $\Hcal$.
\end{proposition}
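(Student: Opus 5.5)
Since $\Hcal$ need not be metrizable in the weak topology, I will show sequential continuity along weakly convergent sequences and rely on this being what is used later; alternatively, on the bounded sets of Lemma~\ref{lem:compact_ball}, which are metrizable, this suffices. The plan is to show that each prediction error $\varepsilon_\vartheta(t)$, $t=1,\dots,N$, is a \emph{finite} expression in finitely many coordinates of $b$ and $c$ and in finitely many coordinates of $h=f(c)$, with $f$ the map of Lemma~\ref{lem:invers_def}. Coordinatewise convergence of these quantities under weak convergence is then provided by Lemma~\ref{lem:ch_convergence}.

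First I make the prediction error explicit. By Lemma~\ref{lem:invers_def}, $1/C(z)=H(z)=1+\sum_{k\ge1}h_kz^{-k}$ with $h=f(c)$, so
\begin{align*}
\varepsilon_\vartheta(t)=\sum_{j=0}^{\infty}h_j\,w(t-j),\qquad w(s):=y(s)-\sum_{k=1}^{\infty}b_k u(s-k),
\end{align*}
where $h_0=1$. Under Assumption~\ref{ass.i.c.}, $u(s)=y(s)=0$ for $s<-M$. Hence $w(s)=0$ for $s<-M$, since every term $u(s-k)$ with $k\ge1$ then vanishes. For $-M\le s\le N$ the sum defining $w(s)$ only involves $b_k$ with $k\le s+M\le N+M$. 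It follows that, for $t\le N$, the sum over $j$ truncates at $j\le t+M\le N+M$. Therefore $\varepsilon_\vartheta(t)$ is a polynomial in $b_1,\dots,b_{N+M}$ and $h_1,\dots,h_{N+M}$, whose coefficients depend only on the data.

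Next, take $\vartheta^{(n)}=(b^{(n)},c^{(n)})\rightharpoonup(b,c)$ in $\Hcal$. Weak convergence in the product space implies weak convergence of each component in $\Hcal_b$ and $\Hcal_c$, because functionals of the form $(v,0)$ and $(0,v)$ are admissible. Applying Lemma~\ref{lem:ch_convergence}, whose first claim holds verbatim for $\Hcal_b$ via the reproducing property, gives $b_k^{(n)}\to b_k$, $c_k^{(n)}\to c_k$ and $h_k^{(n)}\to h_k$ for every fixed $k$. Since a polynomial in finitely many variables is continuous, $\varepsilon_{\vartheta^{(n)}}(t)\to\varepsilon_\vartheta(t)$ for each $t$. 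Hence $V_N(\vartheta^{(n)})\to V_N(\vartheta)$, as $V_N$ is a finite sum of squares.

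The main obstacle is the first step: justifying the truncation. This requires interpreting $1/C(z)$ as the causal series $H$ acting on the finite-support sequence $w$, so that no convergence issue arises even if $h\notin\ell^1$. The finite support given by Assumption~\ref{ass.i.c.} is exactly what makes this algebra rigorous.
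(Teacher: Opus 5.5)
Your proof is correct and follows essentially the same route as the paper: use $H=1/C$ from Lemma~\ref{lem:invers_def} and Assumption~\ref{ass.i.c.} to write $\varepsilon_\vartheta(t)=\sum_{j=0}^{t+M}h_j\,w(t-j)$ as a finite expression in $b_1,\dots,b_{N+M}$ and $h_1,\dots,h_{N+M}$, then pass to the limit using the coordinatewise convergence of Lemma~\ref{lem:ch_convergence}. You are slightly more careful than the paper in two places: you justify componentwise weak convergence in the product space, and you state $b_k^{(n)}\to b_k$, which the paper leaves implicit. Your hedge about sequential continuity is unnecessary, because $V_N$ depends only on finitely many point evaluations, and these are bounded linear functionals, hence weakly continuous.
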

\begin{pf}
    Let  
    \begin{align} \label{def_nu}
        \nu(t):=B(z)u(t)=\sum_{k=1}^{t+M}b_ku(t-k). 
    \end{align}
    Note that, $\nu(t)=0$ for $t\leq -M$. By \eqref{eq:pred_err} and \eqref{def_nu} we have that
\begin{align}\label{eq:eps_h_delta}
    \varepsilon_\vartheta(t)&=\sum_{k=0}^{t+M}h_k\left[y(t-k)-\nu(t-k)\right]\\
    &=\sum_{k=0}^{t+M}h_ky(t-k)-\sum_{k=0}^{t+M}\sum_{l=1}^{t-k+M}h_kb_lu(t-k-l).\nn
\end{align}
    Take $\vartheta^{(n)}=(b ^{(n)},c ^{(n)})\rightharpoonup (b ,c 
    )=\vartheta$. By Lemma \ref{lem:ch_convergence} $c_k^{(n)}\rightarrow c_k$, $ h_k^{(n)}\rightarrow h_k$ for any $k\in \Nbb$. By \eqref{eq:eps_h_delta}, $\varepsilon_{\vartheta^{(n)}}(t)$  is a finite sum of terms obtained as product of a finite number of terms as $b_k$'s and $h_k$'s, then
    \begin{align*}
        \lim_{n\to \infty}\varepsilon_{\vartheta^{(n)}}(t)=\varepsilon_{\vartheta}(t),\quad t=1,\dots,N
    \end{align*}
    which implies 
    \begin{align*}
        \lim_{n\to\infty}V_N(\vartheta ^{(n)})=V_N(\vartheta).
    \end{align*}\hspace*{\fill} \qed 
\end{pf}
\fix{It is worth noting that our identification approach is not able to guarantee that the estimated $C(z)$ is minimum-phase. However, this does not impact the overall analysis. In fact, the objective function in the Tikhonov regularization problem \eqref{eq:tikhonv_problem} remains well-defined even when $h_k$ diverges as $k \rightarrow \infty$. This is because, under Assumption 2, the functional $V_N(\vartheta)$, i.e.  the average squared prediction error \eqref{eq:Vn}, solely depends on $h_0, \ldots, h_{N+M}$, see \eqref{eq:eps_h_delta}.}

\begin{theorem}\label{th:thikhonov_solution}
Under Assumptions  \ref{ass.stab_ker} and \ref{ass.i.c.}, the Tikhonov regularization problem \eqref{eq:tikhonv_problem} admits solution.
\end{theorem}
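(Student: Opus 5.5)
The plan is the direct method of the calculus of variations: restrict the minimization to a bounded sublevel set, which is weakly compact by Lemma~\ref{lem:compact_ball}, and show the objective is weakly lower semicontinuous there. Write
\[
J(\vartheta)=V_N(\vartheta)+\gamma_b\norm{b}^2_{\Hcal_b}+\gamma_c\norm{c}^2_{\Hcal_c},\qquad \vartheta=(b,c)\in\Hcal,
\]
and note that $J(\vartheta)=V_N(\vartheta)+\norm{\vartheta}^2_{\Hcal}$ with the inner product on $\Hcal=\Hcal_b\times\Hcal_c$ defined above.

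First, evaluate $J$ at the feasible point $\vartheta_0=(0,0)$. Then $C(z)=1$, $B(z)=0$, and $J(\vartheta_0)=\frac1N\sum_{t=1}^N y(t)^2=:J_0<\infty$. Since $V_N\ge 0$, any $\vartheta$ with $J(\vartheta)\le J_0$ satisfies $\gamma_b\norm{b}^2_{\Hcal_b}\le J_0$ and $\gamma_c\norm{c}^2_{\Hcal_c}\le J_0$. Hence the infimum of $J$ over $\Hcal$ equals its infimum over $\Bcal_b\times\Bcal_c$ with $r_b=J_0/\gamma_b$ and $r_c=J_0/\gamma_c$. If $J_0=0$, then $\vartheta_0$ is already a minimizer; otherwise these radii are finite and positive, as Lemma~\ref{lem:compact_ball} requires. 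By that lemma, and under Assumption~\ref{ass.stab_ker}, $\Bcal_b\times\Bcal_c$ is weakly compact in $\Hcal$.

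Second, establish weak lower semicontinuity of $J$. By Proposition~\ref{prop:finite_sum}, which uses Assumption~\ref{ass.i.c.}, $V_N$ is weakly continuous on $\Hcal$. The squared norm $\norm{\vartheta}^2_{\Hcal}$ is weakly lower semicontinuous. This follows from the argument of Lemma~\ref{prop:lower_semicon}: expand $\norm{\vartheta^{(n)}-\vartheta}^2_\Hcal\ge0$ and pass to the limit in the cross term. Equivalently, it is the special case of that lemma with $\Scal=\Hcal$, the kernels scaled by $1/\gamma_b$ and $1/\gamma_c$, and $v_b=v_c=0$. Since the sum of a weakly continuous functional and a weakly lsc functional is weakly lsc, $J$ is weakly lsc. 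One subtlety is that weak compactness gives convergent subnets rather than subsequences. I would bypass this either by invoking Eberlein--\v{S}mulian, or by noting that bounded sets of a Hilbert space are weakly sequentially compact and that Lemma~\ref{prop:lower_semicon} and Proposition~\ref{prop:finite_sum} were proved sequentially.

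Finally, take a minimizing sequence $\vartheta^{(n)}\in\Bcal_b\times\Bcal_c$ with $J(\vartheta^{(n)})\to\inf J$. By the first step, extract a subsequence converging weakly to some $\vartheta^\star\in\Bcal_b\times\Bcal_c$. Lower semicontinuity then gives $J(\vartheta^\star)\le\liminf J(\vartheta^{(n)})=\inf J$, so $\vartheta^\star$ solves \eqref{eq:tikhonv_problem}.

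The main obstacle is already handled by Proposition~\ref{prop:finite_sum}: weak convergence only gives coordinatewise convergence, via Lemma~\ref{lem:ch_convergence}. This suffices only because Assumption~\ref{ass.i.c.} makes each $\varepsilon_\vartheta(t)$ depend on finitely many $b_k$ and $h_k$. The remaining care in this proof is the reduction to a bounded set and checking that the objective is well defined even when $C(z)$ is not minimum phase.
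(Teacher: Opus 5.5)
Your proposal is correct and follows the paper's proof. It combines weak lower semicontinuity from Lemma~\ref{prop:lower_semicon} (with $\Xcal_b=\gamma_b^{-1}\Kcal_b$, $\Xcal_c=\gamma_c^{-1}\Kcal_c$, $v_b=v_c=0$) and Proposition~\ref{prop:finite_sum}, coercivity to restrict to a product of balls, weak compactness via Lemma~\ref{lem:compact_ball}, and the Weierstrass theorem; your explicit radii $r_b=J_0/\gamma_b$, $r_c=J_0/\gamma_c$ from $J(0)=\frac{1}{N}\sum_{t=1}^N y(t)^2$ and your care about sequential versus topological weak compactness only make explicit what the paper leaves implicit.
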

\begin{comment}
\begin{theorem}\label{th:thikhonov_solution}
Let $\Kcal_b,\Kcal_c$ be two stable kernels. Assume that the initial conditions $\mathrm y^-$ and $\mathrm u^-$ are such that there exists $M>0$ for which $y(s)=u(s)=0$ $\forall s<-M$. Then, the Tikhonov regularization problem \eqref{eq:tikhonv_problem} admits solution.
\end{theorem}
\end{comment}
\begin{pf}
Recall that $\vartheta=(b,c)$. By Lemma \ref{prop:lower_semicon} (with $\Xcal_b=\gamma_b^{-1} \Kcal_b, \Xcal_c=\gamma_c^{-1}  \Kcal_c,v_b=0, v_c=0$) and Proposition \ref{prop:finite_sum} it follows that
    \begin{align}\label{eq:non_linear_sum}
    V_N(\vartheta)+{\gamma_b} \norm{b }^2_{\Hcal_b}+{\gamma_c} \norm{c }^2_{\Hcal_c}    \end{align}
    is lower-semicontinuous with respect to the weak topology induced by $\Hcal_b\times\Hcal_c$. Since $V_N(\vartheta)$ is bounded from below,  it follows that the functional in \eqref{eq:non_linear_sum} is coercive. Thus, there exists two balls $\Bcal_b=\{b \in\Hcal_b:\norm{b }^2_{\Hcal_b}\leq r_b\}$ and $\Bcal_c=\{c \in\Hcal_c:\norm{c }^2_{\Hcal_c}\leq r_c\}$ with $r_b$ and $r_c$ sufficiently large such that Problem \eqref{eq:tikhonv_problem} is equivalent to 
    \begin{align}\label{eq:tik_prob}
    \min_{(b ,c ) \in \Bcal_b\times \Bcal_c}{V_N(b,c )+{\gamma_b}\norm{b }^2_{\Hcal_b}+{\gamma_c}\norm{c }^2_{\Hcal_c}}.
\end{align}
By Lemma \ref{lem:compact_ball}, $\Bcal_b\times\Bcal_c$ is compact in the weak topology. Therefore, the Weierstrass theorem \citep[Theorem 2.D]{zeidler1995applied} guarantees the existence of a minimum for \eqref{eq:tik_prob}, and thus for \eqref{eq:tikhonv_problem}, because we minimize a lower-semicontinuous functional over a compact set. 
\hspace*{\fill} \qed 
\end{pf}

\fix{\begin{remark}
The theorem above is important because it determines whether  it is legitimate to approximate Problem \eqref{eq:tikhonv_problem} with its finite dimensional version, i.e., when $b$ and $c$ are replaced by their truncated version. Since Theorem \ref{th:thikhonov_solution} guarantees the existence of a solution, which belongs to $\Hcal_b\times\Hcal_c\subset \ell^1\times\ell^1$, the approximation is legitimate. In plain words, the bias introduced by the truncation is negligible, provided that the practical length of the truncated impulse responses is chosen sufficiently large.
In particular, the finite-dimensional approximation can be easily computed using the Matlab routine {\rm \texttt{armax.m}}.
\end{remark}}
\smallskip

We conclude this section providing an alternative proof of Theorem \ref{th:thikhonov_solution}. The latter provides some insights on the structure of such solution. We partition $b$ as follows
\begin{align*}
    b=\begin{bmatrix}
        \bar{b}\\
        \check{b}
    \end{bmatrix}, \quad 
    \bar{b}=\begin{bmatrix}
        b_1\\
        \vdots\\
        b_{N+M}
    \end{bmatrix}, \quad 
    \check{b}=\begin{bmatrix}
        b_{N+M+1}\\
        b_{N+M+2}\\
        \vdots
    \end{bmatrix}.
\end{align*}
and we apply the same partition on $\Kcal_b$ and its inverse 
\begin{align*}
    \Kcal_b=\begin{bmatrix}
        \Kcal_{\bar{b}} & \Kcal_{\bar{b}\check{b}}\\
        \Kcal_{\check{b}\bar{b}} & \Kcal_{\check{b}}
    \end{bmatrix},\quad  
    \Rcal_b=\Kcal_b^{-1}=\begin{bmatrix}
        \Rcal_{\bar{b}} & \Rcal_{\bar{b}\check{b}}\\
        \Rcal_{\check{b}\bar{b}} & \Rcal_{\check{b}}
    \end{bmatrix}
\end{align*}
where $\Kcal_{\bar{b}},\Rcal_{\bar{b}}\in \mathbb{R}^{(N+M)\times (N+M)},\ \Kcal_{\check{b}\bar{b}},\ \Rcal_{\check{b}\bar{b}}\in\mathbb{R}^{\infty\times{({N+M})}} $ and $\Kcal_{\check{b}},\Rcal_{\check{b}}\in \mathbb{R}^{\infty\times\infty}$. We perform the same partition on $c$, $\Kcal_c$ and its inverse \fix{matrix} using the corresponding notation.
Let $\bar{\vartheta}=(\bar{b},\bar{c})$ and $\check{\vartheta}=(\check{b},\check{c})$. Then, Problem \eqref{eq:tikhonv_problem} is equivalent to 
\begin{align}\label{eq:new_tikhonv_problem}
    &\min_{\substack{\bar{\vartheta}\in \fix{\mathbb{R}^{(N+M)}\times\mathbb{R}^{(N+M)}} \\ \check{\vartheta}\in \Hcal_{\check{b}}\times\Hcal_{\check{c}} }} J_1(\bar{\vartheta},\check{\vartheta})+J_2(\bar{\vartheta},\check{\vartheta})
\end{align}
where
\begin{align*}
    J_1(\bar{\vartheta},\check{\vartheta})&=V_N(b,c)+{\gamma_b}\norm{\bar{b}}^2_{\Rcal_{ \bar b}}+{\gamma_c}\norm{\bar{c}}^2_{\Rcal_{\bar c}},\\
    J_2(\bar{\vartheta},\check{\vartheta})&={\gamma_b}(\norm{\check{b}}^2_{\Rcal_{\check{b}}}+2\check{b}^\top\Rcal_{\check{b}\bar{b}}\bar{b} )+{\gamma_c}(\norm{\check{c}}^2_{\Rcal_{\check{c}}}+2\check{c}^\top\Rcal_{\check{c}\bar{c}}\bar{c} ),
\end{align*}
 $\Hcal_{\check{b}}$ and $\Hcal_{\check{c}}$ are  the RKHS
with  induced norm $\norm{\cdot}^2_{\Rcal_{\check{b}}}$ and $\norm{\cdot}^2_{\Rcal_{\check{c}}}$, respectively. 
By Assumption \ref{ass.i.c.} equation \eqref{eq:eps_h_delta} in the proof of Proposition \ref{prop:finite_sum} { holds. By \eqref{eq:eps_h_delta}, we have that $\varepsilon_{\vartheta}(t)$ only depends on $b_j$ with $j\in \{1\ldots t+M\}$ and $h_i$ with $i\in \{1\ldots t+M\}$. By Lemma \ref{lem:invers_def}, we have that $h_i$ depends on $c_{q}$ with ${q}\in \{1\ldots i\}$. Accordingly, $\varepsilon_{\vartheta}(t)$ only depends on $b_j$ with $j\in \{1\ldots t+M\}$ and $c_{q}$ with ${{q}}\in \{1\ldots t+M\}$. We conclude that}
$V_N$ does not depend on $\check{\vartheta}$. Thus, with some abuse of notation, we have
\begin{align*}
    V_N(\bar{\vartheta},\check{\vartheta})=V_N(\bar{\vartheta}),\\
    J_1(\bar{\vartheta},\check{\vartheta})=J_1(\bar{\vartheta}).
\end{align*}
Hence, Problem \eqref{eq:new_tikhonv_problem} is equivalent to 
\begin{align*}
    \min_{\bar{\vartheta}\in \fix{\mathbb{R}^{(N+M)}\times\mathbb{R}^{(N+M)}}}\left[
    J_1(\bar{\vartheta})+\min_{{\check{\vartheta}\in \Hcal_{\check{b}}\times\Hcal_{\check{c}} }} J_2(\bar{\vartheta},\check{\vartheta})\right]. 
\end{align*}
Consider Lemma \ref{prop:lower_semicon} with $\Xcal_b=(\gamma_b \Rcal_{\check{b}})^{-1},$ $\Xcal_c=(\gamma_c \Rcal_{\check{c}})^{-1}$, $v_b=2\gamma_b\Rcal_{\check{b}\bar{b}}\bar{b}$ and  $v_c=2\gamma_c\Rcal_{\check{c}\bar{c}}\bar{c}$. Condition $(\Xcal_b v_b,\Xcal_c v_c) \in \mathcal S$ follows from the fact that 
$$\norm{\Xcal_b v_b}^2_{\Xcal_{{b}}^{-1}}=4\gamma_b\bar b^\top \Rcal_{\bar b \check{b}} \Rcal_{\check{b}}^{-1}\Rcal_{\check{b}\bar b} \bar b \leq 4\gamma_b\norm{\bar{b}}^2_{\Rcal_{\bar{b}}}<\infty$$   where we exploited the fact that 
$$  \Rcal_{\bar b}-\Rcal_{\bar b \check{b}} \Rcal_{\check{b}}^{-1}\Rcal_{\check{b}\bar b}\geq 0. $$ In a similar way, we have that  
$\norm{\Xcal_c v_c}^2_{\Xcal_{{c}}^{-1}}<\infty$. Hence, 
$$\norm{(\Xcal_b v_b,\Xcal_c v_c)}^2_{\Scal}=\norm{\Xcal_b v_b}^2_{\Xcal_{{b}}^{-1}}+ \norm{\Xcal_c v_c}^2_{\Xcal_{{c}}^{-1}}<\infty.$$
 Thus, $J_2$ is lower-semicontinous with respect to $\check{\vartheta}$. Moreover,
\begin{equation*}
    J_2(\bar{\vartheta},\check{\vartheta})\geq -{\gamma_b} \norm{\bar{b}}^2_{\Rcal_{\bar{b}}} -{\gamma_c} \norm{\bar{c}}^2_{\Rcal_{\bar{c}}},
\end{equation*}
i.e. $J_2(\bar \vartheta,\cdot)$ bounded from below and it is coercive. Accordingly, $J_2$ admits a point of minimum with respect to $\check{\vartheta}$. It is not difficult to see that $J_2$ is strictly convex and its unique point of minimum is obtained imposing the stationarity conditions:
\begin{equation}\label{eq:stat_cond}
\begin{split}
    \check{b}=-\Rcal^{-1}_{\check{b}}\Rcal_{\check{b}\bar{b}}\bar{b}\\
    \check{c}=-\Rcal_{\check{c}}^{-1}\Rcal_{\check{c}\bar{c}}\bar{c}.
\end{split}
\end{equation}
Substituting \eqref{eq:stat_cond} in \eqref{eq:new_tikhonv_problem} we obtain 
\begin{align}\label{eq:rephrased_new_tikhonv_problem}
    \!\min_{\bar{\vartheta}\in \fix{\mathbb{R}^{(N+M)}\times\mathbb{R}^{(N+M)}}}\!V_N(\bar\vartheta)\!+\!{\gamma_b}\norm{\bar{b}}^2_{\Kcal^{-1}_{\bar{b}}}\!+\!{\gamma_c}\norm{\bar{c}}^2_{\Kcal_{\bar{c}}^{-1}}
\end{align}
where we exploited the fact that $\Rcal_b=\Kcal^{-1}_{ b}$ and thus 
\begin{align*}
    \Kcal_{\bar{b}}^{-1}=\Rcal_{\bar{b}}-\Rcal_{\bar{b}\check{b}}\Rcal^{-1}_{\check{b}}\Rcal_{\check{b}\bar{b}}\\
    \Kcal_{\bar{c}}^{-1}=\Rcal_{\bar{c}}-\Rcal_{\bar{c}\check{c}}\Rcal^{-1}_{\check{c}}\Rcal_{\check{c}\bar{c}}.
\end{align*}
Finally, the finite dimensional problem \eqref{eq:rephrased_new_tikhonv_problem} admits solution because the search of the minimum can be restricted to the compact set 
\begin{align*}
    \Bcal\!=\!\left\{\!(\bar{b},\bar{c})\!\in\! \fix{\mathbb{R}^{(N+M)}\!\times\!\mathbb{R}^{(N+M)}}\!: {\gamma_b}\norm{\bar{b}}^2_{\Kcal_{\bar {b}}^{-1}}\!+\!{\gamma_c}\norm{\bar{c}}^2_{\Kcal^{-1}_{ \bar c}}\!\leq\! r
    \!\right\}
\end{align*}
with $r>0$ taken sufficiently large and the objective function is continuous on $\Bcal$. We conclude that a solution to  Problem \ref{eq:tikhonv_problem} is
\begin{align}\label{eq:inf_dim_solution}
    b=\begin{bmatrix}
        \bar{b}\\
        \Kcal_{\check{b}\bar{b}}\Kcal_{\bar{b}}^{-1}\bar{b}
    \end{bmatrix}, \quad c=\begin{bmatrix}
        \bar{c}\\
         \Kcal_{\check{c}\bar{c}}\Kcal_{\bar{c}}^{-1}\bar{c}
    \end{bmatrix}
\end{align}
where $(\bar{b},\bar{c})$ is a solution to \eqref{eq:rephrased_new_tikhonv_problem} and we exploited the identities 
\begin{align*}
    \Rcal_{\check{b}\bar{b}}=-\Rcal_{\check{b}}^{-1}\Kcal_{\check{b}\bar{b}}\Kcal_{\bar{b}}^{-1}\\
    \Rcal_{\check{c}\bar{c}}=-\Rcal_{\check{c}}^{-1}\Kcal_{\check{c}\bar{c}}\Kcal_{\bar{c}}^{-1}.
\end{align*}
 In view of  (\ref{eq:inf_dim_solution}) we have the following corollary. 
\begin{comment}
\begin{corollary} Under the hypotheses of Theorem \ref{th:thikhonov_solution} the solutions of Problem (\ref{eq:tikhonv_problem}) admit the following representation 
$$  b=\sum_{k=1}^{N+M}\alpha_k [\mathcal K_b]_k, \quad c=\sum_{k=1}^{N+M}\beta_k [\mathcal K_c]_k, $$
where $[\mathcal K_b]_k$,  $[\mathcal K_c]_k$  denote the $k$-th column of $\mathcal K_b,\mathcal K_c$, respectively, and $\alpha_k,\beta_k$, with $k=1,\,\ldots,\, N+M$, are reals.
\end{corollary}
\end{comment}
\begin{corollary} Under Assumptions  \ref{ass.stab_ker} and \ref{ass.i.c.}, the solutions of Problem (\ref{eq:tikhonv_problem}) admits the following representation 
$$  b=\sum_{k=1}^{N+M}\alpha_k [\mathcal K_b]_k, \quad c=\sum_{k=1}^{N+M}\beta_k [\mathcal K_c]_k, $$
where $[\mathcal K_b]_k$,  $[\mathcal K_c]_k$  denote the $k$-th column of $\mathcal K_b,\mathcal K_c$, respectively, and $\alpha_k,\beta_k$, with $k=1,\,\ldots,\, N+M$, are reals.
\end{corollary}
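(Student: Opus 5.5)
The plan is to read the representation directly off the structure of every minimizer, using the decomposition $\vartheta=(\bar\vartheta,\check\vartheta)$ from the alternative proof of Theorem~\ref{th:thikhonov_solution}. The statement concerns \emph{all} solutions, not just the one in \eqref{eq:inf_dim_solution}, so the first step is to show that every minimizer has that form. Let $(b,c)$ be any solution of \eqref{eq:tikhonv_problem} and set $\bar\vartheta=(\bar b,\bar c)$, $\check\vartheta=(\check b,\check c)$.

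\textbf{Step 1: every minimizer satisfies the stationarity conditions.} Since $V_N$ and $J_1$ depend only on $\bar\vartheta$, the tail $\check\vartheta$ must minimize $J_2(\bar\vartheta,\cdot)$. Otherwise, replacing it by a better tail would strictly decrease the objective, contradicting optimality. Because $J_2(\bar\vartheta,\cdot)$ is strictly convex, its minimizer is unique and is given by \eqref{eq:stat_cond}. Combining this with the identities $\Rcal_{\check b\bar b}=-\Rcal_{\check b}^{-1}\Kcal_{\check b\bar b}\Kcal_{\bar b}^{-1}$ and its analogue for $c$, we get
\[
\check b=\Kcal_{\check b\bar b}\Kcal_{\bar b}^{-1}\bar b, \qquad \check c=\Kcal_{\check c\bar c}\Kcal_{\bar c}^{-1}\bar c .
\]
Thus every solution has the form \eqref{eq:inf_dim_solution}, with $\bar\vartheta$ a solution of \eqref{eq:rephrased_new_tikhonv_problem}.

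\textbf{Step 2: reading off the coefficients.} Define $\alpha:=\Kcal_{\bar b}^{-1}\bar b\in\Rbb^{N+M}$ and $\beta:=\Kcal_{\bar c}^{-1}\bar c$. These are well defined because the kernels are strictly positive, so the finite principal blocks $\Kcal_{\bar b}$ and $\Kcal_{\bar c}$ are invertible. Then
\[
b=\begin{bmatrix}\Kcal_{\bar b}\\ \Kcal_{\check b\bar b}\end{bmatrix}\alpha .
\]
The stacked matrix on the right is exactly the first $N+M$ columns of $\Kcal_b$, so $b=\sum_{k=1}^{N+M}\alpha_k[\Kcal_b]_k$. The same argument gives $c=\sum_{k=1}^{N+M}\beta_k[\Kcal_c]_k$.

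\textbf{Main obstacle.} The delicate point is rigor in Step 1 and in the Schur-complement identities. These involve infinite matrices, and $\Rcal_{\check b}^{-1}$ must be interpreted as an operator. I would treat the identities as given from the preceding derivation.

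As a cleaner cross-check, one can use an RKHS orthogonality argument. Let $\mathcal V_b=\mathrm{span}\{[\Kcal_b]_k\}_{k\le N+M}$ and decompose $b=b_{\parallel}+b_{\perp}$ with $b_\perp\perp\mathcal V_b$ in $\Hcal_b$. By the reproducing property, $b_k=\innerprod{b}{\Kcal_b\mathbf e_k}_{\Hcal_b}$, so $b_\perp$ has zero entries $1,\dots,N+M$. Hence $V_N$, which depends only on $\bar\vartheta$, is unchanged by removing $b_\perp$, while $\norm{b}^2_{\Hcal_b}=\norm{b_\parallel}^2_{\Hcal_b}+\norm{b_\perp}^2_{\Hcal_b}$ strictly decreases unless $b_\perp=0$. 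Therefore every minimizer has $b_\perp=0$, and likewise $c_\perp=0$. This variant avoids inverse infinite matrices entirely, so I would present it as the main argument and remark that it agrees with \eqref{eq:inf_dim_solution}.
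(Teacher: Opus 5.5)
Your Steps 1--2 coincide with the paper's argument. The corollary is stated there simply ``in view of'' \eqref{eq:inf_dim_solution}, i.e.\ by writing $b$ as the first $N+M$ columns of $\Kcal_b$ applied to $\alpha=\Kcal_{\bar b}^{-1}\bar b$, exactly as you do. Your remark that \emph{every} minimizer must carry the tail \eqref{eq:stat_cond} is useful: $J_1$ does not depend on $\check\vartheta$ and $J_2(\bar\vartheta,\cdot)$ has a unique minimizer. This makes explicit a step the paper leaves implicit when it passes from ``a solution is'' to ``the solutions admit''.

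The orthogonal-projection argument you would present as the main proof is a genuinely different and correct route. It is the classical Kimeldorf--Wahba representer argument, and it needs only three things: the reproducing property $b_k=\innerprod{b}{\Kcal_b\mathbf e_k}_{\Hcal_b}$, Pythagoras in $\Hcal_b$ and $\Hcal_c$, and the fact (from \eqref{eq:eps_h_delta} and Lemma~\ref{lem:invers_def}) that $V_N$ depends only on the first $N+M$ entries of $b$ and $c$.

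Compared with the paper's route, it buys three things:
\begin{enumerate}
\item It avoids the infinite-matrix objects $\Rcal_{\check b}^{-1}$ and the Schur-complement identities entirely.
\item It does not require $\Kcal_{\bar b}$ to be invertible. The strict positivity you invoke in Step 2, which the paper's derivation also tacitly uses, is not among the corollary's stated hypotheses; without it the representation still holds, only the coefficients lose uniqueness.
\item It yields existence without the weak-compactness machinery of Theorem~\ref{th:thikhonov_solution}. On the finite-dimensional subspace spanned by the first $N+M$ kernel columns the objective is continuous and coercive, hence has a minimizer. Projecting any candidate onto that subspace cannot increase the cost, so that minimizer is global.
\end{enumerate}

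What the paper's route buys in exchange is the explicit reduced problem \eqref{eq:rephrased_new_tikhonv_problem} in the truncated variables $\bar\vartheta$, together with the closed-form extension of the tail. This is the computational recipe used afterwards. You can recover it from your argument via the change of variables $\bar b=\Kcal_{\bar b}\alpha$, under which $\norm{b}^2_{\Hcal_b}=\alpha^\top\Kcal_{\bar b}\alpha=\bar b^\top\Kcal_{\bar b}^{-1}\bar b$ when $\Kcal_{\bar b}$ is invertible.
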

This result can be understood as a generalization of the representer theorem for sequences \citep{PILLONETTO2024111347, kimeldorf1970correspondence} to a particular class of empirical risk functionals and for which $\varepsilon_\vartheta$ is not linear with respect to $\vartheta$.

The expression in \eqref{eq:inf_dim_solution} also suggests how to compute the solution to Problem \eqref{eq:tikhonv_problem}: it is sufficient to compute numerically the solution to Problem \eqref{eq:rephrased_new_tikhonv_problem}, which considers the truncated impulse responses $(\bar b,\bar c)$ of practical length $N+M$, and then the solution to (\ref{eq:tikhonv_problem}) is given by (\ref{eq:inf_dim_solution}). 

%In practice, we can approximate the impulse responses by truncating them. According to Theorem \ref{th:thikhonov_solution}, this is acceptable  because the coefficients of the estimated impulse responses decay to zero as 
%$k\to\infty$. In plain words, the bias introduced by the truncation is negligible, provided that the practical length of the truncated impulse responses is chosen sufficiently large.

\section{Hyperparameter estimation}\label{hyperparameter estimation}
In order to compute the MAP estimator, solution to \eqref{eq:tikhonv_problem}, an estimate of $\eta$ is needed.
 In the following, we propose a procedure to estimate these hyperparameters from the data by means of the marginal likelihood.
 As discussed at the end of the previous section, we can consider an approximation of the impulse responses which consist of a truncated version of themselves. Consider the truncated version of \eqref{def_mod_inf}
\begin{align}
    \label{def_mod}y(t)=\sum_{k=1}^{T} b_k u(t-k)+\sum_{k=0}^T c_k e(t-k),
\end{align}
where $ T\in\Nbb$, chosen sufficiently large, is called practical length. In plain words, we approximate \eqref{def_mod_inf} through an high-order  MAX model.  The truncated parameter vector is defined as $\theta=[
    b^\top \ c^\top
]^\top$
where, with some abuse of notation we use the same symbols used for the infinite dimensional case, to indicate $b^\top=[b_1\ b_2\ \dots\ b_{T}]$ and $c^\top=[c_1\ c_2\ \dots\ c_T]$.
Accordingly, $B(z)$, and $C(z)$ are, from now on, considered as finite sums; the one step ahead predictor \eqref{hatytheta} and the prediction error \eqref{eq:pred_err}, are consistent with these new definitions. We also introduce the finite dimensional kernel matrix as 
\begin{align*}
    K_\eta=\begin{bmatrix} 
       \lambda_b K_{b} & 0 \\ 0 &  \lambda_cK_{c} 
    \end{bmatrix}
\end{align*}
 where $K_b$ and $K_c$ are the top-left $T\times T$ sub-matrices of $\Kcal_b$ and $\Kcal_c$ respectively, and $\lambda_b,\lambda_c>0$ are the scale factors. Notice that, we made explicit its dependence on $\eta$, namely the hyperparameters that characterize the kernels. In this setup we have that $\theta$ is a Gaussian random vector with zero mean, and covariance matrix $K_\eta$, namely $\theta\sim \mathcal{N}(0,K_\eta)$.
It is straightforward to see that the finite dimensional Tikhonov regularization problem for $\theta$ is
\begin{align} \label{Theta_hat}
    {\hat{\theta}_{\eta}}= \argmin_{\theta \in \Rbb^{2T}}V_N(\theta)+ \frac{\sigma^2}{N}\norm{\theta}^2_{{K_\eta}^{-1}}.  
\end{align}
It is not difficult to see that
\begin{align}
\pb(\mathrm y^+|\theta,\mathrm y^-,\eta,\sigma^2) &=\prod_{t=1}^N\frac{1}{\sqrt{2\pi\sigma^2}}e^{\left(-\frac{1}{2\sigma^2} \varepsilon_\theta(t)^2 \right)}\nn,\\
\pb(\theta|\eta,\sigma^2)&=\frac{1}{\sqrt{(2\pi)^{2T}|{K_\eta}}|}e^{\left(-\frac{1}{2} \|\theta\|^2_{K_\eta^{-1}} \right)}\nn
\end{align}
where $|K_\eta|$ denotes the determinant of $K_\eta$. 
Taking into account the approximation \eqref{eq:probabilityMAP} where $\vartheta$ is now replaced with $\theta$, we have that
\begin{align}
\label{neg_lik}
    \ell(\theta,\eta;\mathrm y^+)&=-\log \pb(\mathrm y^+,\theta|\mathrm y^-,\eta,\sigma^2)\\
    &=\frac{N}{2\sigma^2}V_N(\theta)+\frac{1}{2}\norm{\theta}^2_{K^{-1}_\eta}+\frac{1}{2}\log{|K_\eta|}+\kappa\nn
\end{align}
where $\kappa$ is a constant not depending on $\theta$ and $\eta$. An estimate of $\sigma^2$ can be obtained using a low-bias ARX model as suggested in \cite{GOODWIN_1992}.
Then, an estimate of $\eta$ is given minimizing the negative log-marginal likelihood
\begin{align}\label{likelihood_minimization}
    \hat{\eta}=\argmin_{\eta\in \Ccal}\ell(\eta;\mathrm  y^+)
\end{align}
where $\ell(\eta;\mathrm y^+)=-\log \pb(\mathrm y^+|\mathrm y^-,\eta,\sigma^2)$, 
\begin{align}\label{integral}
    \pb(\mathrm y^+|{\mathrm y^-,}\eta,\sigma^2)=\int_{\Rbb^{2T}} \pb (\mathrm y^+,\theta|\mathrm y^-,\eta,\sigma^2)d\theta
\end{align}
and $\Ccal$ is the set of constraints for the hyperparameters.

The difficulty in the optimization problem \eqref{likelihood_minimization} is the evaluation of the objective function $\ell$ because it is difficult to find a closed form expression for the integral in \eqref{integral}. In what follows we propose a method to evaluate approximately the function $\ell(\eta;\mathrm y^+)$, given the hyperparameters vector $\eta$. We start by writing the integral \eqref{integral} in terms of its Laplace approximation \citep[Chapter 27]{mackay2003information}. Let 
\begin{align} \label{eq:h}
    h(\theta, \eta):=\frac{1}{2\sigma^2}V_N(\theta)+\frac{1}{2N}\norm{\theta}^2_{K^{-1}_\eta}+\frac{1}{2N}\log{|K_\eta|}.
\end{align}
Hence, in view of (\ref{neg_lik}) we have 
\begin{align}
&\ell(\theta,\eta;\mathrm y^+)=Nh(\theta, \eta)+\kappa,\nn\\
&\pb (\mathrm y^+,\theta|\mathrm y^-,\eta,\sigma^2)=\frac{1}{\tilde \kappa}e^{-Nh(\theta, \eta)}\nn
\end{align}
where $\tilde \kappa=e^{\kappa}$ is the normalization constant. Moreover,
\begin{align}\label{integral2}
     \pb(\mathrm y^+|\mathrm y^-,\eta,\sigma^2)&=\frac{1}{\tilde \kappa}\int_{\Rbb^{2T}}{}{e^{-Nh(\theta, \eta)}d\theta.}
   \end{align}
   The Laplace approximation method revolves around the idea to approximate $h(\theta;\eta)$ in (\ref{integral2}) with the second order Taylor expansion around an estimate of $\theta$ for which it holds $$\frac{\partial h(\theta,\eta)}{\partial\theta}=0.$$ In particular we choose to expand the series around a point of minimum of the function $h(\theta;\eta)$. The minimum is found by solving numerically the minimization problem in $\eqref{Theta_hat}$ for the given $\eta$. We denote this solution as ${\hat{\theta}_\eta}$ and then we can compute the integral using such approximation:
 \begin{align*}
    \pb( &\mathrm y^+|\mathrm y^-,\eta,\sigma^2)\\ &\simeq \frac{1}{\tilde \kappa}\left[e^{-Nh({\hat \theta_\eta}, \eta)}\left(\frac{2\pi}{N}\right)^{T}\sqrt{\left|\frac{\partial^2{h(\theta, \eta)}}{\partial{\theta}\partial\theta^\top}\Bigr|_{\theta=\hat{\theta}_{\eta}}\right|^{-1}}\right].\nn
\end{align*}
Then, $$\ell(\eta;\mathrm y^+)\approx \hat{\ell}(\eta; \mathrm y^+)+\check\kappa$$ where $\hat{\ell}(\eta ; \mathrm y^+ )$ is defined as
\begin{align}\label{lhat}
\hat{\ell}(\eta ; \mathrm y^+ ) := Nh( \hat{\theta}_{\eta},\eta)+\frac{1}{2}{{\log\left(\left| \frac{\partial^2{h(\theta, \eta)}}{\partial{\theta}\partial\theta^\top} \Bigr|_{\theta=\hat{\theta}_{\eta}}\right|\right)}}
\end{align}
and $\check \kappa$ is a constant not depending on $\eta$.
Next, we characterize the expression for $\hat\ell$ once $\hat\theta_\eta$ is given. First, we compute the derivatives of $h(\theta,\eta)$ defined in \eqref{eq:h} with respect to $\theta$:
\begin{align*}
    \frac{\partial{h(\theta, \eta)}}{\partial{\theta}}&=\frac{\partial}{\partial \theta}\left[\frac{1}{2\sigma^2}V_N(\theta)+\frac{1}{2N}\norm{\theta}^2_{{K_\eta}^{-1}}\right]\\
    &=\frac{1}{2\sigma^2}\frac{\partial V_N(\theta)}{\partial\theta}+\frac{1}{N}{K_\eta}^{-1}\theta\nn\\
    &=\frac{1}{N}\left[\frac{N}{2\sigma^2}\frac{\partial V_N(\theta)}{\partial\theta}+{K_\eta}^{-1}\theta\right];\nn
\end{align*}
\begin{align}\label{der_2_h}
    \frac{\partial^2{h(\theta, \eta)}}{\partial{\theta}\partial\theta^\top}=\frac{1}{N}\left[\frac{N}{2\sigma^2}\frac{\partial^2 V_N(\theta)}{\partial{\theta}\partial\theta^\top}+{K_\eta}^{-1}\right].
\end{align}
In regard to $V_N(\theta)$, its first order derivative with respect to $\theta$ is
\begin{align*}
  \frac{\partial V_N(\theta)}{\partial\theta}=\frac{1}{N}\sum_{t=1}^N  2\varepsilon_\theta(t)\frac{\partial\varepsilon_\theta(t)}{\partial\theta}=-\frac{2}{N}\sum_{t=1}^N\psi_\theta(t)\varepsilon_\theta(t)
\end{align*}
where
\begin{align*}
    \psi_\theta(t)=\frac{\partial \hat{y}_\theta(t|t-1)}{\partial\theta},
\end{align*}
and the second order derivative is 
\begin{align}\label{eq:sec_der_vn}
        \frac{\partial^2 V_N(\theta)}{\partial{\theta}\partial\theta^\top}=\frac{2}{N}\sum_{t=1}^N \left[ \psi_\theta(t)\psi^\top_\theta(t)-\varepsilon_\theta(t)\Gamma_{\theta}(t)\right]
\end{align}
where
\begin{align*}
    \Gamma_{\theta}(t)=\frac{\partial^2 \hat{y}_\theta(t|t-1)}{{\partial{\theta}\partial\theta^\top}}.
\end{align*}
Substituting \eqref{eq:sec_der_vn} in \eqref{der_2_h}, we get
\begin{align*}
    \frac{\partial^2h(\theta,\eta)}{{\partial{\theta}\partial\theta^\top}}\!=\!\frac{1}{N}\!\left[\frac{1}{\sigma^2}\!\sum_{t=1}^N\left(\psi_\theta(t)\psi_\theta^\top(t)-\varepsilon_\theta(t)\Gamma_{\theta}(t)\right)+K_\eta^{-1}\right]
\end{align*}
and thus
\begin{align*}
        \!&\!\hat{\ell}(\eta ; \mathrm y^+ )\!=\frac{N}{2\sigma^2}V_N({\hat{\theta}_\eta})+\frac{1}{2}||{\hat{\theta}_\eta}||^2_{K_\eta^{-1}}+\frac{1}{2}\log|K_\eta|\hspace{7.584px}\\
        \!&\!+\!\frac{1}{2}\log\!\left|\frac{1}{N}\!\left[\!\frac{1}{\sigma^2}\sum_{t=1}^N\!\left(\psi_{{\hat{\theta}_\eta}}\!(t)\psi^\top_{{\hat{\theta}_\eta}}\!(t)\!-\!\varepsilon_{{\hat{\theta}_\eta}}\!(t)\Gamma_{{\hat{\theta}_\eta}}(t)\right)\!+\!K_\eta^{-1}\right]\right|.
\end{align*}
It remains to be examined whether it is possible to compute $\psi_\theta(t)$ and $\Gamma_\theta(t)$ in an efficient way as their computation involves filtering operations. We start with the computation of { $\psi_\theta (t)\!=\!\left[\begin{smallmatrix}
        \psi_{\theta,B}^\top (t)\!&
        \psi_{\theta,C}^\top (t)      
    \end{smallmatrix}\right]^\top\in\Rbb^{2T}$}
where
\begin{align}
    \psi_{{\theta},B}(t)=\begin{bmatrix}
        \frac{\partial\hat{y}_{\theta}(t|t-1)}{ \partial b_1}\\
        \vdots\\
        \frac{\partial\hat{y}_{\theta}(t|t-1)}{\partial b_{T}}
    \end{bmatrix}, 
    \quad
     \psi_{{\theta},C}(t)=\begin{bmatrix}
        \frac{\partial\hat{y}_{\theta}(t|t-1)}{\partial c_1}\\
        \vdots\\
        \frac{\partial\hat{y}_{\theta}(t|t-1)}{\partial c_T}
    \end{bmatrix} \nn.
\end{align}
From \eqref{hatytheta} it is not difficult to see that
\begin{subequations}
    \begin{align}
            \frac{\partial\hat{y}_{\theta}(t|t-1)}{\partial b_k}&=\frac{1}{C(z)} u(t-k)\label{yhat_der_b},\\
        \frac{\partial\hat{y}_{\theta}(t|t-1)}{\partial c_k}
    &=\frac{1}{C(z)}\varepsilon_{\theta}(t-k)\label{yhat_der_c}.
    \end{align}
\end{subequations}
If we define
\begin{equation}\label{eq:filter1}
    \tilde{u}(t)=\frac{1}{C(z)}u(t), \quad \tilde{\varepsilon}_{\theta}(t)=\frac{1}{C(z)}\varepsilon_{\theta}(t),
\end{equation}
then, we can write
\begin{align}
   \psi_{{\theta},B}(t)=\begin{bmatrix}
        \tilde{u}(t-1)\\
        \vdots\\
       \tilde{u}(t-T)
    \end{bmatrix},  \quad  \psi_{{\theta},C}(t)=\begin{bmatrix}
        \tilde{\varepsilon}_{\theta}(t-1)\\
        \vdots\\
      \tilde{\varepsilon}_{\theta}(t-T)
    \end{bmatrix}. \nn
\end{align}
Consider now the computation of $\Gamma_{{\theta}}(t)\in \Rbb^{2T\times 2T}$.
It is not difficult to see
\begin{subequations}
    \begin{align}
        \frac{\partial^2\hat{y}_{\theta}(t|t-1)}{\partial b_k\partial b_{\Tilde{k}}}&=0,\\
        \frac{\partial^2 \hat{y}_{\theta}(t|t-1) }{\partial b_k \partial c_{\Tilde{k}} }&=-\frac{1}{C(z)}\frac{\partial \hat{y}_{\theta}(t-\Tilde{k}|t-\Tilde{k}-1) }{\partial b_k }\label{double der bc}.
    \end{align}
\end{subequations}
Substituting \eqref{yhat_der_b} and \eqref{eq:filter1} in \eqref{double der bc} we get
\begin{align*}
      \frac{\partial^2 \hat{y}_{\theta}(t|t-1) }{\partial b_k \partial c_{\Tilde{k}} }=-\frac{1}{{C(z)}} \tilde u(t-(k+\Tilde{k})).
\end{align*}
As it concerns the remaining partial derivative, we have that
\begin{align} \label{double c der}
      \frac{\partial^2\hat{y}_\theta(t|t-1)}{\partial{c_k\partial{c_{\Tilde{k}}}}}= -\frac{1}{C(z)}&\left[\frac{\partial{\hat{y}_{\theta}(t-k|t-k-1)}}{\partial c_{\Tilde{k}}}\right.\\
     &\ \left.+\frac{\partial\hat{y}_{\theta}(t-\Tilde{k}|t-\Tilde{k}-1)}{\partial{c_k}}\right].\nn
\end{align}
Substituting \eqref{yhat_der_c} and \eqref{eq:filter1} in \eqref{double c der}, we obtain
\begin{align*}
\frac{\partial^2\hat{y}_{\theta}(t|t-1)}{\partial{c_k\partial{c_{\Tilde{k}}}}}=
     -\frac{2}{{C(z)}}\tilde\varepsilon_{\theta}(t-(k+\Tilde{k})).
\end{align*}
Notice that the matrix $\Gamma_\theta(t)$ can be written as
\begin{align*}
  \Gamma_{\theta}(t)=\begin{bmatrix}
  0_{T \times T} & \Gamma_{{\theta},B}(t)\\
  {{\Gamma_{{\theta},B}}(t)}^\top & \Gamma_{{\theta},C}(t)
\end{bmatrix}  
\end{align*}
where
\begin{align*}
        \Gamma_{{\theta},B}(t)&=\begin{bmatrix}
        \frac{\partial^2\hat{y}_{\theta}(t|t-1)}{\partial b_1\partial c_1} & \dots & \frac{\partial^2 \hat{y}_{\theta}(t|t-1)}{\partial b_1\partial c_T}\\
        \vdots&\ddots & \vdots\\
        \frac{\partial^2 \hat{y}_{\theta}(t|t-1)}{\partial b_{T}\partial c_1} & \dots & \frac{\partial^2 \hat{y}_{\theta}(t|t-1)}{\partial b_{T} \partial c_T} \end{bmatrix},\nn\\
        \Gamma_{{\theta},C}(t)&=\begin{bmatrix}
        \frac{\partial^2\hat{y}_{\theta}(t|t-1)}{\partial c_1\partial c_1}&\dots&\frac{\partial^2 \hat{y}_{\theta}(t|t-1)}{\partial c_1\partial c_T}\\
        \vdots &\ddots  & \vdots\\
        \frac{\partial^2\hat{y}_{\theta}(t|t-1)}{\partial c_T\partial c_1}&\dots&\frac{\partial^2 \hat{y}_{\theta}(t|t-1)}{\partial c_T \partial c_T}
    \end{bmatrix}.\nn\end{align*}
If we define
\begin{align}\label{eq.filter2}
    \bar{u}(t)=-\frac{1}{C(z)}\tilde u(t), \ \quad \bar{\varepsilon}_{\theta}(t)=-\frac{2}{C(z)}\tilde\varepsilon_{\theta}(t),
\end{align}
we obtain the \fix{Hankel} matrices
\begin{subequations}\label{eq:gamma}
\begin{align*}
       \Gamma_{{\theta},B}(t)&=
    \left[\begin{smallmatrix}
        \bar{u}(t-2) &\bar{u}(t-3) & \dots & \bar{u}(t-(T+1))\\
        \bar{u}(t-3) &\bar{u}(t-4) & \dots & \bar{u}(t-(T+2))\\
        \vdots& \vdots & &\vdots\\
        \bar{u}(t-(T+1)) &\bar{u}(t-(T+2))& \dots & \bar{u}(t-2T)\\
    \end{smallmatrix}\right],\\
    \Gamma_{{\theta},C}(t)&=
     \left[\begin{smallmatrix}
        \bar{\varepsilon}_{\theta}(t-2) &\bar{\varepsilon}_{\theta}(t-3) & \dots & \bar{\varepsilon}_{\theta}(t-(T+1))\\
        \bar{\varepsilon}_{\theta}(t-3) &\bar{\varepsilon}_{\theta}(t-4) & \dots & \bar{\varepsilon}_{\theta}(t-(T+2))\\
        \vdots& \vdots & &\vdots\\
        \bar{\varepsilon}_{\theta}(t-(T+1)) &\bar{\varepsilon}_\theta(t-(T+2))& \dots & \bar{\varepsilon}_{\theta}(t-2T)\\
    \end{smallmatrix}\right].
\end{align*}
\end{subequations}
Accordingly, the construction of $\psi_\theta(t)\in\Rbb^{2T}$ and $\Gamma_\theta(t)\in\Rbb^{2T\times 2T}$ for $t=1,\,\dots,\, N$ can be done performing only $4N$ (scalar) filtering operations to define $\tilde{u}(t)$, $\tilde{\varepsilon}_{\hat{\theta}_\eta}(t)$, $\bar u(t)$, $\bar{\varepsilon}_{\hat{\theta}_\eta}(t)$. Moreover, this number of operations does not depend on the chosen practical length $T$.
   For the sake of completeness, Algorithm \ref{Alg} reports the complete procedure for the evaluation of $\hat{\ell}(\eta ; \mathrm y^+)$ given $\eta$, $\sigma^2$, $\mathrm{u}^+$ and $\mathrm{y}^+$. The optimization problem in Step \ref{step1} is solved numerically using the \texttt{armax.m} routine of the Matlab System Identification Toolbox.
It is worth noting that the initial conditions are never completely known. To perform the previous filtering operations we set the initial conditions equal to zero. 
Such a choice introduces an error that goes to zero as $N$ increases, see Section 3.2 in \cite{LJUNG_SYS_ID_1999}.

        \begin{algorithm}
    \caption{Evaluation of $  \hat{\ell}(\eta ; \mathrm y^+ )$}\label{Alg}
     \textbf{Input}: $ \mathrm u^+$, $ \mathrm y^+$, $\eta$, $\sigma^2$.\\
  \textbf{Output}: $\hat{\ell}(\eta ; \mathrm y^+ )$
  
    \begin{algorithmic}[1]
      \State \label{step1}Compute $\hat{\theta}_\eta$ as solution to \eqref{Theta_hat};
      \State Compute $V_N(\hat\theta_\eta)$ and construct $K_\eta$;
      \State \mbox{Compute~the~prediction data} $\varepsilon_{\hat\theta_\eta}^+\!=~\![\,\varepsilon_{\hat\theta_\eta}\!(1)\,\ldots\,\varepsilon_{\hat\theta_\eta}\!(N)\,]^\top$ using $\mathrm y^+$, $ \mathrm u^+$;
      \State \mbox{Compute $ \mathrm{\tilde u^+}\!=\![\,\tilde u(1)\,\ldots\,\tilde u(N)\,]^\top$}, $\tilde\varepsilon_{\hat\theta_\eta}^+=~[\,\tilde\varepsilon_{\hat\theta_\eta}(1)\,\ldots\,\tilde\varepsilon_{\hat\theta_\eta}(N)\,]^\top$ through \eqref{eq:filter1} and using $ \mathrm u^+$, $\varepsilon_{\hat\theta_\eta}^+$;
      \State Construct $\psi_{\hat\theta_\eta}(t)$, $t=1,\,\dots,\,N$ using $ \mathrm{\tilde u^+}$, $\tilde\varepsilon_{\hat\theta_\eta}^+$;
      \State \mbox{Compute $ \mathrm{\bar{u}^+}=[\,\bar u(1)\,\ldots\,\bar u(N)\,]^\top$}, \mbox{$\bar\varepsilon_{\hat\theta_\eta}^+=[\,\bar\varepsilon_{\hat\theta_\eta}(1)\,\ldots\,\bar\varepsilon_{\hat\theta_\eta}(N)\,]^\top$} through \eqref{eq.filter2} and using $\tilde {\mathrm u}^+$, $\tilde\varepsilon_{\hat\theta_\eta}^+$;
      \State Construct $\Gamma_{\hat\theta_\eta}(t)$, $t=1,\,\dots,\,N$ leveraging on the \fix{Hankel} structure of the blocks and using $ \mathrm{\bar u^+}$, $\bar\varepsilon_{\hat\theta_\eta}^+$;
        \State Compute $\hat{\ell}(\eta ; \mathrm y^+)$ through (\ref{lhat}).
    \end{algorithmic}
    \end{algorithm}

    Finally, the estimator of ${\eta}$ is obtained by minimizing the Laplace approximation of $\ell$:
\begin{align}\label{hat_eta}
     \hat{\eta}=\argmin_{\eta\in \Ccal} \hat{\ell}(\eta ; \mathrm y^+).
\end{align}
This optimization problem is solved numerically using the \texttt{fmincon.m} Matlab routine.

As already mentioned, the value of $\sigma^2$ used in the evaluation of the approximated marginal likelihood can be estimated through a low-bias ARX model. 
An alternative strategy that can be considered is to embed $\sigma^2$ in the set of hyperparameters to be optimized by solving \eqref{hat_eta}. In such a case, the estimate obtained through the low-bias ARX model can still be used as a guess of $\sigma^2$ in the initialization of the algorithm. 

Finally, we highlight that, once the  hyperparameters have been estimated with the outlined procedure, an estimate $\hat{\theta}_{\hat{\eta}}$ of the impulse responses of \eqref{def_mod_inf}  is obtained by solving the optimization problem \eqref{Theta_hat} for $\eta=\hat{\eta}$. 

\textit{Version with pseudo-linear regression. 
}\label{Pseudo_linear_reg}

The previous procedure for the evaluation of the approximated marginal-likelihood requires an optimization operation for computing $\hat{\theta}_\eta$. Next, we propose a strategy which avoids the optimization operation and relies on the pseudo-linear regression approach \citep[Section 7.5]{LJUNG_SYS_ID_1999}. By the forward model \eqref{def_mod}, we have 
\begin{align}\label{eq:pseudolin}
        &y(t)=\underbrace{\sum_{k=1}^{T}b_ku(t-k)+\sum_{k=1}^{T}c_k\varepsilon_\theta(t-k)}_{=:\hat{y}_\theta(t)}+e(t)\\
        &=\underbrace{\begin{bmatrix}
            u(t-1)&u(t-2)&\dots&\varepsilon_\theta(t-1)&\varepsilon_\theta(t-2)&\dots\end{bmatrix}}_{=:\phi(t,\varepsilon_\theta(t))^\top}\theta+e(t)\nn
\end{align}
where we exploited the fact that\fix{, when $\theta$ is the true parameter vector,}
\begin{align*}
\varepsilon_\theta(t)=y(t)-\hat{y}_\theta(t)=e(t).
\end{align*}
The idea is to consider as estimate $\hat\varepsilon$ of $\varepsilon_\theta$, the one obtained from the low-bias ARX model already used for the estimation of the noise variance $\sigma^2$. Then, substituting $\hat\varepsilon$ in \eqref{eq:pseudolin}, we obtain \fix{the approximation}
\begin{align}\label{eq:lin reg y}
    y(t)\fix{\approx}\phi(t,\hat\varepsilon(t))^\top\theta+e(t),
\end{align}
which is a linear regression model with respect to $\theta$.

Since $\theta\sim\mathcal{N}(0,K_\eta)$, the preliminary estimate $\hat\theta_\eta$ of $\theta$ according to \eqref{eq:lin reg y} is given by 
\begin{align}\label{pseudo_lin_Theta_hat}
    \hat\theta_\eta=\argmin_\theta \sum_{t=1}^{N} \left(y(t)-{\phi(t,\hat\varepsilon(t))}^\top\theta\right)^2+\frac{\sigma^2}{N}\norm{\theta}^2_{K_\eta^{-1}}
\end{align}
which can be computed efficiently in closed form \citep{CHEN20132213}.
The complete procedure to evaluate $\hat\ell$ is outlined in Algorithm \ref{Alg3}.
  
\begin{algorithm}
    \caption{Evaluation of $  \hat{\ell}(\eta ; \mathrm y^+ )$}\label{Alg3}
     \textbf{Input}: $ \mathrm u^+$, $ \mathrm y^+$, $\eta$, $\sigma^2$, $\hat\varepsilon=[\,\hat{\varepsilon}(1),\,\dots,\,\hat{\varepsilon}(N)\,]^\top$.\\
  \textbf{Output}: $\hat{\ell}(\eta ; \mathrm y^+)$
 \begin{algorithmic}[1]
 \State Build the vector $\phi(t,\hat\varepsilon(t))$, $t\!=\!1,\,\dots,\,N$ using $ \mathrm u^+$, $\hat{\varepsilon}$;
 \State Compute $\hat{\theta}_\eta$ as solution to \eqref{pseudo_lin_Theta_hat};
      \State Compute $V_N(\hat\theta_\eta)$ and construct $K_\eta$;
      \State Perform Step 3 - Step 7 of Algorithm \ref{Alg};
        \State Compute $\hat{\ell}(\eta ; \mathrm y^+)$ through (\ref{lhat}).
    \end{algorithmic}
    \end{algorithm}
\section{Numerical Experiments}\label{sec:numres}
We present some case studies to evaluate the performance of the proposed estimator. We compare the performance of our estimator with the ones available in the literature.
More precisely, we considered {four} different experimental setups. For each of them, we generated {$N_m$} SISO discrete time random models. Then, we fed the generated models with different input signals and collected a total of $N$ samples for the training dataset $\Dcal:=\{(y(t),u(t)),\; t=1,\,\dots,\, N\}$.
Subsequently, each model was fed with unit variance WGN as input, and {$N_v$} samples were collected to form the validation dataset $\Dcal_v:=\{(y_v(t),u_v(t)),\; t=1,\,\dots,\,  N_v\}$. {In all the simulations, {if not differently specified}, we set  $N_m=200, N=2000, N_v=1000$ and the noise variance was chosen to have a Signal to Noise Ratio (SNR) equal to $1$.} The models and the input signals chosen in each setup are described below:
\begin{itemize}
    \item \textbf{Setup 1}: The models, of order 40, were created through the \texttt{drss.m} Matlab routine and such that the absolute value of their dominant pole belongs to the interval $[ 0.8, 0.9]$. The input signal, for the training dataset is a square wave whose period is equal to $650$ samples, with $50\%$ duty cycle and range between $[-0.5,0.5]$; 
    \item \textbf{Setup 2}: The models are as in Setup 1. The difference is that the input signal for the training dataset is WGN with unit variance;
    \item \textbf{Setup  3}: The models were generated such that their impulse responses $b$ and $c$ are as follows. Let $\mathcal{U}[\beta,\gamma]$ denote the uniform probability distribution over the real  interval $[\beta,\gamma]$. The impulse response $b$ is such that  $b = g_1 + g_2$ where
\begin{align*}
    g_{1,t} = \alpha_1\beta_1^t+ \alpha_2\beta_2^t,
\end{align*}
 with $\alpha_1 \sim \mathcal{U}[0, 1]$, $\alpha_2 \sim \mathcal{U}[8, 10]$, $\beta_1 \sim \mathcal{U}[0.88, 0.9]$, $\beta_2 \sim \mathcal{U}[0.3, 0.35]$; $g_2$ is the impulse response of a \fix{SISO system} of order 30 generated randomly with most of the poles in high frequency: $95\%$ of the poles are complex and the remaining ones are real; the $80\%$ of the complex ones have phase within the interval $[\pi/4,\pi/4+\pi/6]$, moreover $\norm{g_2}/\norm{g_1} = 0.05$.
Roughly speaking, those systems are characterized by an impulse response having a fast decay rate and a slow decay rate. The impulse response $c$, instead, is generated through the \texttt{drss.m} Matlab routine with $40$-th order and such that the absolute value of its dominant poles belongs to the interval $[ 0.8, 0.9]$. 
The input signal is a square wave whose period is equal to $40$ samples, with $50\%$ duty cycle and range between $[-1,1]$.
\item \textbf{Setup 4:} The Monte Carlo study is composed by $N_m=100$ experiments. In each experiment, we generated a SISO model using the procedure utilized in subsection  ``Benchmark Problems'' of \cite{pillonetto2023full} where the order of the polynomials is set equal to 40 and the input is chosen in the same way.  We collected $N=2000$ and $N_v=500$ samples for the training and test datasets, respectively.

\end{itemize}
In each case study, we considered one of the aforementioned setup and for each trial we estimated the model using the corresponding training dataset and the different identification paradigms. The practical length has been chosen as $T=50$.
Then, we tested the performance of each estimator,using two different indices: the Coefficient Of Determination (COD) and the Average Impulse Response (\textrm{AIR}) fit. The COD index for the $\mathsf{k}$-step ahead predictor is defined as:
\begin{align*}
 \textrm{COD}_\mathsf{k}=100\left(1-\frac{||y_v-\hat{y}_{\mathsf{k},v}||^2}{||y_v-\bar y_v||^2}\right),
\end{align*}
where $y_v=[\, y_v(1) \; y_v(2)\;\ldots\; y_v(\fix{N_v})\,]^\top$ is the output of the validation dataset, $\hat{y}_{\mathsf{k},v}$ is the vector containing the $\mathsf{k}$-step ahead predictions of $y_v$ using the estimated model and 
\begin{align*}
    \bar y_v=\frac{1}{N}\sum_{k=1}^N y_v(t).
\end{align*}
Let $\textrm{COD}_\mathsf{k}^i$ be the coefficient of determination of the i-th model computed using the $\mathsf{k}$-step ahead predictor. We define the average COD index for the $\mathsf{k}$-step ahead predictor over the 200 realizations as
\begin{align*}
    \overline{\textrm{COD}}_\mathsf{k}=\frac{1}{200}\sum_{i=1}^{200}\textrm{COD}_\mathsf{k}^i. 
\end{align*}
\fix{The AIR fit of the impulse response $b$ is defined as}
% The AIR fit of the estimator is defined as
% \begin{align*}
%    AIR=\frac{1}{2}(AIR(b,\hat{b})+AIR(c,\hat{c})),
% \end{align*}
% where
\begin{align*}
\fix{\rm AIR_b}=100\left(1-{\frac{\sum_{k=1}^{T}\left(b_k-\hat{b}_k\right)^2}{\sum_{k=1}^{T}\left(b_k-\bar{b}\right)^2}}\right),
\end{align*}
\fix{where} $b$ and $\hat b$ are the true and estimated impulse responses, respectively,
\begin{align*}
    \bar b=\frac{1}{T}\sum_{k=1}^Tb_k
\end{align*}
and \fix{the {\rm AIR} of $c$} is defined likewise.

\subsection{Case study I}
In this first study, we consider Setup 1 with the following estimators:
\begin{itemize}
    \item 
   \fix{ \textbf{FIR OE-TC} which denotes the Finite Impulse Response Output Error kernel-based estimator based on the parametrization (\ref{par_inf_ARX}) proposed in \cite{PILLONETTO_2011_PREDICTION_ERROR} where $a_k=0$ for all $k\in \mathbb N$ and equipped with the TC kernel defined in \eqref{defTC};} %TOGLIEREI\footnote{These estimators are ARX-based estimators because model (\ref{par_inf_ARX}) is approximated through a high-order ARX model.\label{ff}} 
    \item \textbf{PEM+BIC} which denotes classical PEM approach to estimate Box-Jenkins (BJ) models, as implemented in the \texttt{bj.m} function of the MATLAB System Identification Toolbox, of the form 
\begin{align*}
    y(t)=\frac{M(z)}{N(z)}u(t)+\frac{P(z)}{Q(z)}e(t)
\end{align*}
 with
\begin{align*}
M(z)&=\sum_{k=1}^{k_1} m_k z^{-k},\quad N(z)=1+\sum_{k=1}^{k_1} n_kz^{-k} \\
P(z)&=1+\sum_{k=1}^{k_2} p_k z^{-k},\quad Q(z)=1+\sum_{k=1}^{k_2} q_kz^{-k} \end{align*}
where the orders $k_1,k_2\in\{1,2,\dots, 10\}$. The model orders $k_1$ and $k_2$ are chosen according to the BIC criterion, see \cite{SCHWARZ_1978};
\item \textbf{MAX-TC} which denotes the kernel-based estimator proposed in Section~\ref{hyperparameter estimation} equipped with the TC kernel; the approximation of the marginal likelihood is performed according to Algorithm \ref{Alg};
\item \textbf{ARX-TC} denotes the kernel-based estimator based on the parametrization (\ref{par_inf_ARX}) proposed in \cite{PILLONETTO_2011_PREDICTION_ERROR} and equipped with the TC kernel;
\item \textbf{MAX-D2} which denotes the kernel-based estimator proposed in Section~\ref{hyperparameter estimation} equipped with the D2 kernel defined in \eqref{dDC2}; the approximation of the marginal likelihood is performed according to Algorithm \ref{Alg};
\item \textbf{ARX-D2} denotes the kernel-based estimator based on the parametrization (\ref{par_inf_ARX}) proposed in \cite{PILLONETTO_2011_PREDICTION_ERROR} and equipped with the D2 kernel.
\end{itemize}
In all the aforementioned \fix{kernel-based} estimators the noise variance is estimated with a low-bias model \citep{GOODWIN_1992}.
Figure~\ref{studio1} shows the boxplots relative to \fix{$\rm{AIR}_b$ (left panel)  and $\rm{AIR}_c$ (right panel) for the considered estimators. Note that, $\rm{AIR}_c$ is not depicted  for \textbf{FIR OE-TC} since the latter is only characterized by the impulse response $b$}. It is possible to notice that both the estimators \textbf{MAX-TC} and \textbf{MAX-D2} perform better than the ARX-based estimators using the same kernels (i.e. \textbf{ARX-TC}, \textbf{ARX-D2}). Furthermore, all these estimators outperform the \fix{$\mathrm{AIR}_b$ index of \textbf{FIR OE-TC} and \textbf{PEM+BIC} estimators while the $\mathrm{AIR}_c$ index of \textbf{PEM+BIC} estimator results to be the higher.} In Figure~\ref{studio_1_mean} it is plotted the average COD fit $\overline{ \textrm{COD}}_{\mathsf{k}}$ for $\mathsf{k}=1,\dots,12$ for each estimator. As we may expect, $\overline{ \textrm{COD}}_{\mathsf{k}}$ decreases as the \fix{prediction horizon} increases\footnote{\fix{For \textbf{FIR OE-TC}, the k-step-ahead predictor $\hat y(t|t-k)$ coincides with the one-step-ahead predictor $\hat y(t|t-1)$, since it depends only on the known input sequence. As a result, the COD index is independent of the prediction horizon $k$. 
}}. Both the \textbf{MAX-TC} and \textbf{MAX-D2} estimators exhibit an higher fit value with respect to all the other estimators for any  considered perdition horizon.

\begin{figure}
    \centering
 \includegraphics[width=0.92\linewidth]{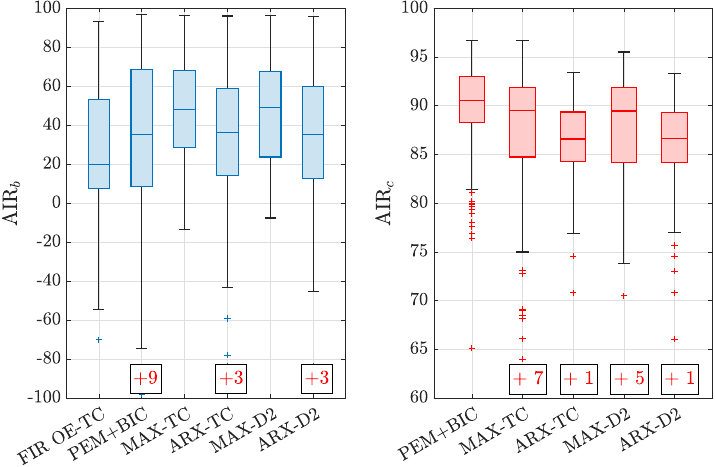}
\caption{\textit{Case-study I:} \fix{ boxplots of the \textrm{AIR} index of the plant ($\rm{AIR}_b$, left panel)  and of the error model ($\rm{AIR}_c$, right panel) over 200 simulations for the considered estimators.} The number in the box denotes the amount of outliers not shown in the picture.}
    \label{studio1}
\end{figure}

\begin{figure}
    \centering
\includegraphics[width=0.47\textwidth]{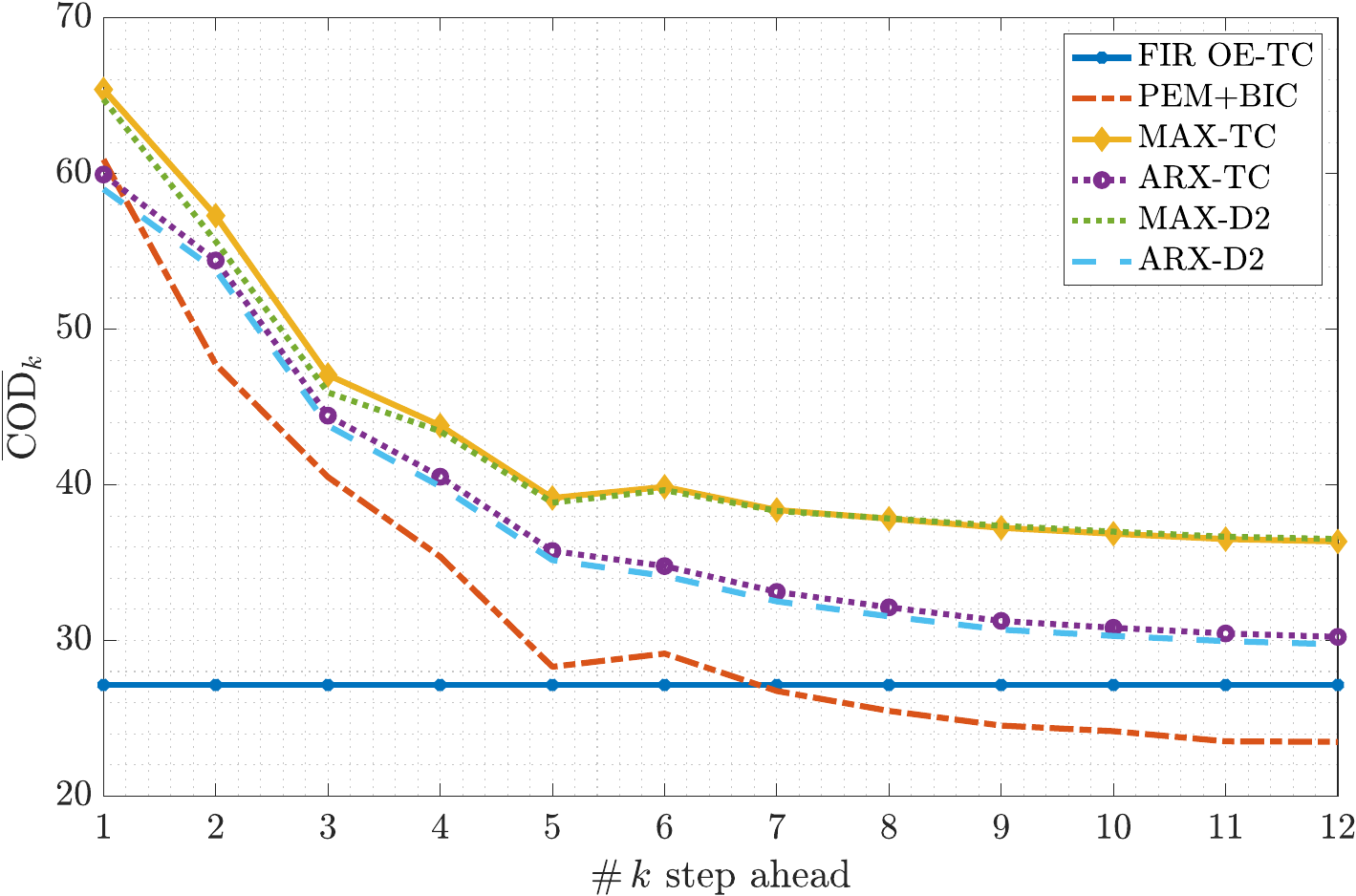}
     \caption{\textit{Case study I:} average COD  index $\overline{ \textrm{COD}}_{\mathsf{k}}$ over 200 realizations.}
     \label{studio_1_mean}
\end{figure}

\subsection{Case study II}
The aim of this study is to analyze the effect on the performances when the noise variance is considered as an hyperparameter.
We consider again Setup 1 with \textbf{MAX-TC}, \textbf{ARX-TC}. Moreover, we introduce the additional estimators:
\begin{itemize}
    \item \textbf{MAX-S} is like the \textbf{MAX-TC} estimator but the noise variance is now an additional hyperparameter that needs to be optimized according to the approximation of the marginal likelihood;
\item \textbf{ARX-S} is like the \textbf{ARX-TC} estimator but the noise variance is an additional hyperparameter that needs to be optimized according to the marginal likelihood.
\end{itemize}

\begin{figure}
    \centering
\includegraphics[width=0.95\linewidth]{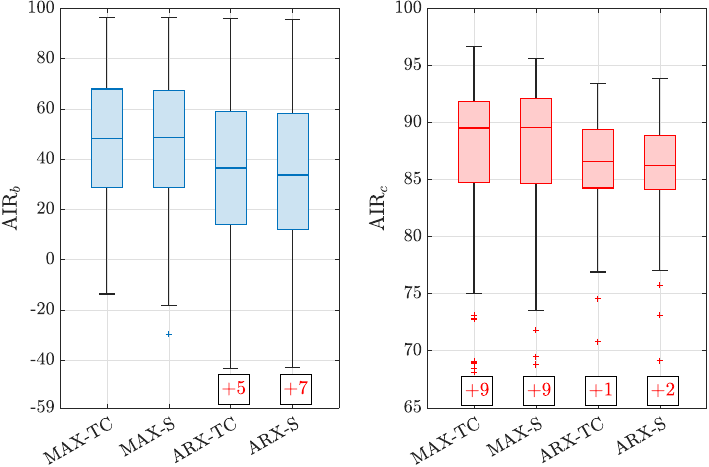}
\caption{\textit{Case study II:} \fix{ boxplots of $\rm{AIR}_b$ (left panel)  and $\rm{AIR}_c$ (right panel) over 200 simulations for the considered estimators.} The number in the box denotes the amount of outliers not shown in the picture.}
\label{studio2_air}
\end{figure}
\fix{In Figure \ref{studio2_air} the boxplots relative to the \textrm{AIR} indices for the considered estimators are depicted.} The introduction of the extra parameter for the estimation of the noise variance does not provide a significant change in the results for all the estimators.  This suggests, in particular, that the procedure to approximate the marginal likelihood in Section \ref{hyperparameter estimation} is also effective for estimating the noise variance. For sake of completeness, we performed the same study for the considered estimators, also employing the D2 kernel. The conclusions that arise from the analysis of these results are similar to the one discussed above.

\subsection{Case study III}
The aim of this study is to compare the approximation in Algorithm \ref{Alg} and the one obtained using the pseudo-linear regression model, i.e., Algorithm \ref{Alg3}. We consider \textbf{MAX-TC}, \textbf{ARX-TC} and we introduce also the following estimator:
\begin{itemize}
    \item \textbf{MAX-PL} which denotes the kernel-based estimator proposed in Section~\ref{Pseudo_linear_reg}, equipped with the TC kernel, and the approximation of the marginal likelihood is performed according to Algorithm \ref{Alg3}. 
\end{itemize}

\begin{figure}
\centering
  \includegraphics[width=0.89\linewidth]{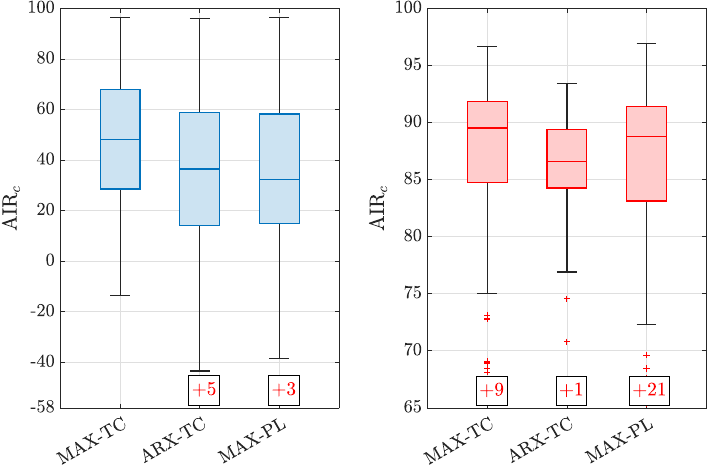}

\vspace{3mm}
  \includegraphics[width=.89\linewidth]{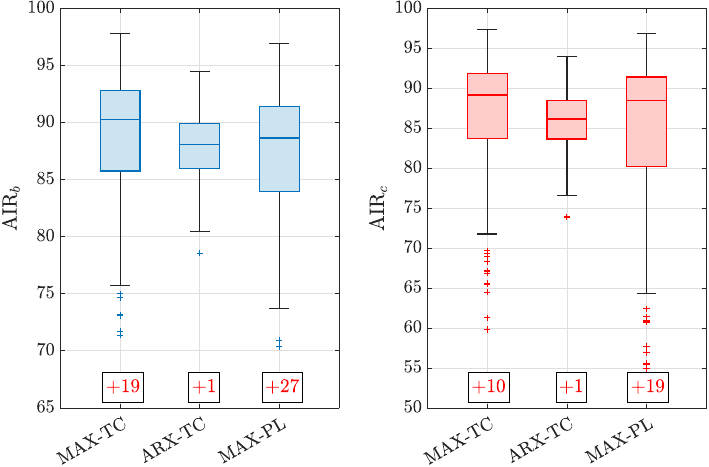}

\caption{\textit{Case-study III}:  \fix{boxplots of $\rm{AIR}_b$ (left panels)  and of $\rm{AIR}_c$ (right panels) over 200 simulations for the considered estimators, when Setup $1$ (Top) and Setup $2$ (Bottom) are used.} The numbers in the boxes denote the amount of outliers not shown in the picture.}
\label{studio3}
\end{figure}

Figure \ref{studio3} (Top) shows the boxplots relative to the \textrm{AIR} \fix{indices} for the considered estimators when Setup 1 is used. \fix{Regarding $\rm AIR_b$, the performance of  \textbf{MAX-PL} is worse than the ones of \textbf{MAX-TC} and \textbf{ARX-TC}. While, \textbf{MAX-PL} performs slightly better than \textbf{ARX-TC} in terms of $\rm AIR_c$.} Figure \ref{studio3} (Bottom) shows the $\mathrm{AIR}$ \fix{indices} using Setup 2. \fix{In this case the median of $\mathrm{AIR}_b$ and $\mathrm{AIR}_c$ indices of \textbf{\fix{MAX-PL}} are higher than the ones relative to \textbf{ARX-TC}.} However, \textbf{MAX-TC} \fix{still} performs better than \textbf{MAX-PL}.
The results of this study highlight that \textbf{MAX-PL} is definitely suboptimal in respect to \textbf{MAX-TC} in particular when the input is not so much persistently exciting, as in Setup 1. On the other hand, when the input is persistently exciting, as in Setup 2, \textbf{MAX-PL} is slightly better than \textbf{ARX-TC}. This is because the prediction error estimated in the preliminary step is more close to the actual one.

\subsection{Case study IV}
We consider Setup 3. In such scenario the impulse response $b$ is characterized by two different decay rates. Thus, the most suitable kernel for it is the integrated one \citep{PILLONETTO2016137}.
 In what follows we consider \fix{\textbf{FIR OE-TC}}, \textbf{MAX-TC}, \textbf{ARX-TC} and the additional estimator:
 \begin{itemize}
 \item \fix{{\textbf{FIR OE-INT} which denotes the Finite Impulse Response Output Error kernel-based estimator based on the parametrization (\ref{par_inf_ARX}) proposed in \cite{PILLONETTO_2011_PREDICTION_ERROR} where $a_k=0$ for any $k\in\mathbb N$ and equipped with the discrete integrated kernel \citep{ZORZI2018125}. The latter is defined as 
    \begin{align}
    \label{defInt}
    [\Kcal_{INT}]_{ij}&=\frac{\beta^{max\{i,j\}}-\alpha^{max\{i,j\}}}{2max\{i,j\}}, 
    \end{align}
where the hyperparameters $\alpha, \beta\in (0,1)$ are such that $\alpha<\beta$;}}
\item \textbf{MAX-INT} which denotes the kernel-based estimator of Section \ref{hyperparameter estimation} equipped with the integrated kernel \eqref{defInt} for $b$ and the TC kernel for $c$; the approximation of the marginal likelihood is performed according to Algorithm 1;
\item \textbf{ARX-INT} denotes the kernel-based estimator based on the parametrization (\ref{par_inf_ARX}) proposed in \cite{PILLONETTO_2011_PREDICTION_ERROR} and equipped with the integrated kernel \eqref{defInt} for $b$ and the TC kernel for $c$.
\end{itemize}

In Figure \ref{studio5} the boxplots relative to the \textrm{AIR} \fix{indices} for the considered estimators are depicted. \fix{It can be noticed that \textbf{MAX-INT} is the best estimator, followed by \textbf{MAX-TC}}. In particular, \textbf{MAX-TC} is worse than \textbf{MAX-INT} because its a priori information on $b$, given by the TC kernel, is less accurate that the one given by the integrated kernel. \fix{ \textbf{FIR OE-INT} performs better than  \textbf{ARX-INT}, suggesting that the independent parametrization in \textbf{FIR OE-INT} provides a slight performance improvement.  Finally, the performance of  \textbf{ARX-INT}} is not satisfying because the a priori information provided by the integrated kernel is associated to a wrong parametrization.
The average COD fit $\overline{ \textrm{COD}}_\mathsf{k}$ for each estimator is plotted in Figure \ref{studio5_mean}. 

\begin{figure}
\centering
\includegraphics[width=0.95\linewidth]{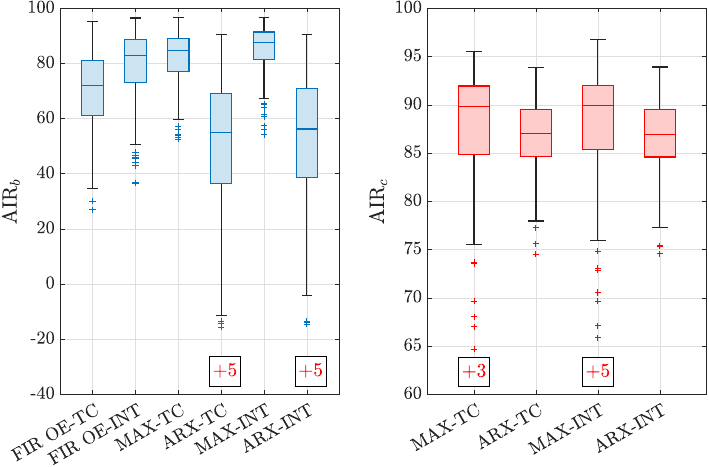}
\caption{\textit{Case-study IV:} \fix{boxplots of $\rm{AIR}_b$ (left panel)  and $\rm{AIR}_c$ (right panel) over 200 simulations for the considered estimators.} The numbers in the boxes denote the amount of outliers not shown in the picture.}
\label{studio5}
\end{figure}
\begin{figure}
    \centering
    \includegraphics[width=0.47\textwidth]{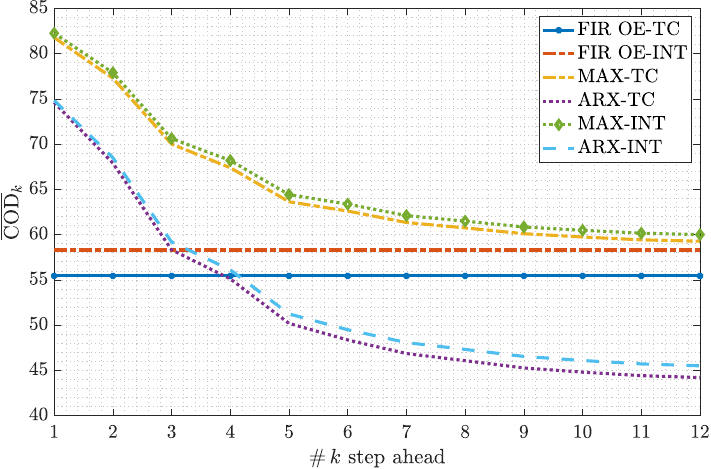}
    \caption{\textit{Case study IV}: average COD index $\overline{ \textrm{COD}}_\mathsf{k}$ over 200 realizations.}
    \label{studio5_mean}
\end{figure}
\subsection{Case study V}
We consider Setup 4 and we compare the performance of \textbf{MAX-TC} and \textbf{ARX-TC} with the one of the following estimator:
\begin{itemize}
    \item \textbf{FB-TC}: which denotes the Full-Bayesian estimator proposed in \cite{pillonetto2023full} on the parametrization (\ref{par_inf_ARX}) and equipped with the TC kernel. 
\end{itemize}
The boxplots for the $\textrm{COD}_1$ index corresponding to the considered estimators are shown in Figure \ref{comp_giapi} (top panel). We can see that \textbf{MAX-TC} performs worse than \textbf{ARX-TC} and \textbf{FB-TC}. The reason is explained below. 
The one-step ahead predictor for \textbf{ARX-TC} and \textbf{FB-TC} depends linearly on the estimated impulse responses whose decay rate is controlled by the kernel hyperparameters. This ensures that the predictor is computed in a robust way.  On the contrary,  the one-step ahead predictor for \textbf{MAX-TC} is obtained performing a filtering operation through $C(z)^{-1}$. While the decay rate of the impulse response of $C(z)$ is controlled by the kernel hyperparameters, we cannot guarantee the same for $C(z)^{-1}$. As a consequence, the computation of the one-step ahead predictor is less robust. Indeed, if we exclude the realizations corresponding to  
$\textrm{COD}_1$  outliers below -100 in at least one of the three estimators (see Figure \ref{comp_giapi}, central panel), and below -10 (see Figure \ref{comp_giapi}, bottom panel), the performance of \textbf{MAX-TC} improves significantly.

\begin{figure}
    \centering
\includegraphics[width=0.47\textwidth]
{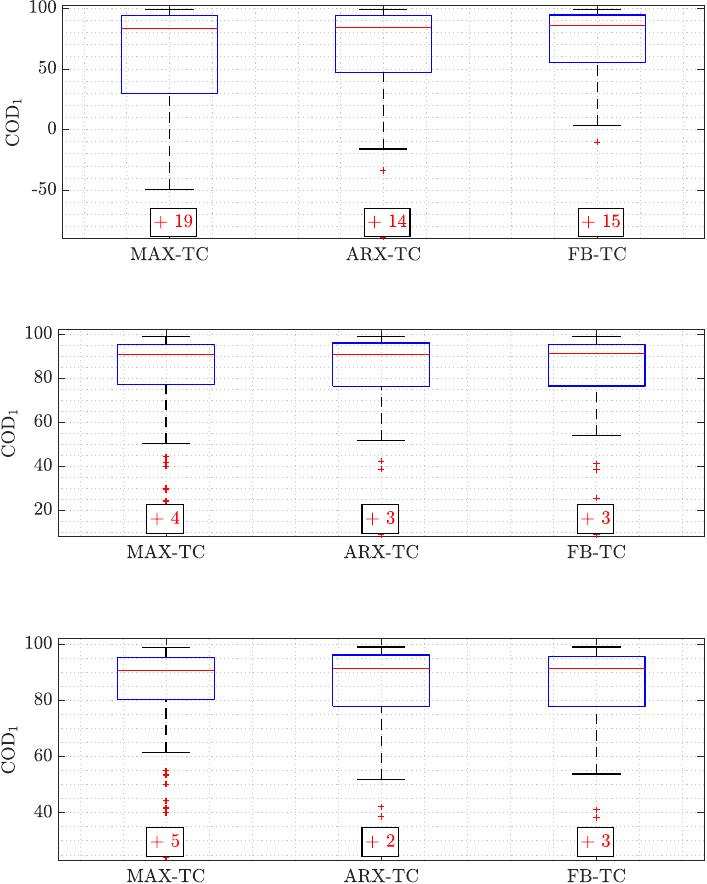}
    \caption{ \textit{Case study V}: {\it Top panel.} $\textrm{COD}_1$ index over 100 realizations. {\it Central panel.} $\textrm{COD}_1$ index after discarding outliers below -100 (82 realizations retained). {\it Bottom panel.} $\textrm{COD}_1$ index after discarding outliers below -10 (78 realizations retained). The numbers in the boxes denote the amount of outliers not shown in the picture but considered in the boxplot.}
    \label{comp_giapi}
\end{figure}

\subsection{Computational Complexity of the proposed methods}
We perform an empirical analysis of the computational cost for the evaluation of the negative log-marginal likelihood in MAX-based estimators (summarized in Algorithm \ref{Alg}, \ref{Alg3}). Moreover, we compare the latter with the one of ARX-based estimators. We  generated  a collection of datasets as in Setup 1, but here  we considered $N=1000$ samples and a square wave input signal with period of 325 samples. For each estimator we considered the TC kernel and 
for each realization, we performed a single evaluation of the marginal likelihood with hyperparameters $\lambda_b=1$, $\lambda_c=1$, $\beta_b=0.2$, $\beta_c=0.2$ and collected the total time $t_{TOT}$ needed to complete the computation.
Moreover, for MAX-based estimators, we collected two additional time periods:\\  \\
$\bullet$  $t_E$, the time needed for the  computation of $\hat{\theta}_\eta$;\\
$\bullet$ $t_\Delta=t_{TOT}-t_E$, the time needed to perform the remaining computations, beyond the one considered in $t_E$.  \\ \\
{All simulations for evaluating the computational performance were carried out on a}
DELL PC with a 13th Generation Intel\textsuperscript{\textregistered} Core\texttrademark{} i7-1355U 1.70GHz, 16 GB of RAM and Windows 11 as operating system. In Table {1}
we report the mean value corresponding to \textbf{MAX-TC}, \textbf{MAX-PL} and \textbf{ARX-TC} estimators. 
\begin{table}
    \centering
    \begin{tabular}{|c|c|c|c|}
    \hline
        &  {\bf MAX-TC} & \bf {MAX-PL} & {\bf ARX-TC}\\
        \hline
         $t_{TOT}$ [ms]& 205.5 & 4.7  & 1.8\\
         $t_E$ [ms]& 181.0 & 1.7  & -\\
         $t_\Delta$ [ms]& 24.5 & 2.9  & - \\
         \hline
    \end{tabular}
    \vspace{0.5em}
    {\small Table 1. Empirical computational costs of MAX-based and ARX-based estimators.}
\end{table}
Although $t_{TOT}$ for \textbf{MAX-TC} is 100 times the one for \textbf{ARX-TC}, most of the time of $t_{TOT}$ for \textbf{MAX-TC} is spent in computing  $\hat{\theta}_\eta$.  We recall the solution to \eqref{Theta_hat} does not admit closed form expression and require computationally expensive optimization steps.  Here, we used the Matlab routine \texttt{armax.m} to estimate $\hat{\theta}_\eta$. We believe $t_E$ can be lowered by embedding the aforementioned optimization steps directly in the routine for the evaluation of the marginal likelihood, without recurring to external functions.  The computational time  $t_{TOT}$ for \textbf{MAX-PL} and \textbf{ARX-TC} is of the same order due to the fact that the computation of  $\hat{\theta}_\eta$ in Step 2 of Algorithm \ref{Alg3} is computed in closed form. Finally, notice that $t_\Delta$ for \textbf{MAX-TC} and \textbf{MAX-PL} differs by an order of magnitude because the two algorithms requires different filtering operations  (more precisely, the filtering operations in \textbf{MAX-TC} are more complex than the ones in \textbf{MAX-PL}).

\section{Conclusions}\label{sec:conclusion}
In this paper we have presented a novel kernel-based method for the identification of nonparametric forward models. Such estimator is the solution to a nonlinear Tikhonov regularization problem for which we have proved the existence of a solution. Thus the finite dimensional approximation of the problem introduces a negligible bias in the estimate. We have considered also the problem to estimate the hyperparameters optimizing the marginal likelihood. Since the latter does not present a closed form expression, we have proposed  a procedure that relies on the Laplace approximation of the marginal likelihood.  Numerical experiments showed that the proposed estimator performs better than the classical kernel-based estimators available in the literature when the a priori information involves the forward parametrization. 

%\begin{ack}                               % Place acknowledgements
% Partially supported by the Roman Senate.  % here.
%TBA
%\end{ack}

\bibliographystyle{model5-names}        % Include this if you use bibtex 

\begin{thebibliography}{40}
\expandafter\ifx\csname natexlab\endcsname\relax\def\natexlab#1{#1}\fi
\providecommand{\bibinfo}[2]{#2}
\ifx\xfnm\relax \def\xfnm[#1]{\unskip,\space#1}\fi
%Type = Article
\bibitem[{Akaike(1974)}]{AKAIKE_1974}
\bibinfo{author}{Akaike, H.} (\bibinfo{year}{1974}).
\newblock \bibinfo{title}{{A new look at the statistical model
  identification}}.
\newblock {\it \bibinfo{journal}{IEEE Transactions on Automatic Control}\/},
  {\it \bibinfo{volume}{19}\/}, \bibinfo{pages}{716--723}.
%Type = Article
\bibitem[{Aronszajn(1950)}]{ARONSZAJN1950}
\bibinfo{author}{Aronszajn, N.} (\bibinfo{year}{1950}).
\newblock \bibinfo{title}{Theory of reproducing kernels}.
\newblock {\it \bibinfo{journal}{Trans. Amer. Math. Soc.}\/},  {\it
  \bibinfo{volume}{68}\/}, \bibinfo{pages}{337--404}.
%Type = Article
\bibitem[{Carmeli \& Toigo(2006)}]{carmeli2006vector}
\bibinfo{author}{Carmeli, E., C.and De~Vito}, \& \bibinfo{author}{Toigo, A.}
  (\bibinfo{year}{2006}).
\newblock \bibinfo{title}{Vector valued reproducing kernel {H}ilbert spaces of
  integrable functions and {M}ercer theorem}.
\newblock {\it \bibinfo{journal}{Analysis and Applications}\/},  {\it
  \bibinfo{volume}{4}\/}, \bibinfo{pages}{377--408}.
%Type = Article
\bibitem[{Chen(2018)}]{CHEN2018109}
\bibinfo{author}{Chen, T.} (\bibinfo{year}{2018}).
\newblock \bibinfo{title}{On kernel design for regularized {LTI} system
  identification}.
\newblock {\it \bibinfo{journal}{Automatica}\/},  {\it \bibinfo{volume}{90}\/},
  \bibinfo{pages}{109--122}.
%Type = Article
\bibitem[{Chen et~al.(2014)Chen, Andersen, Ljung, Chiuso \&
  Pillonetto}]{chen2014system}
\bibinfo{author}{Chen, T.}, \bibinfo{author}{Andersen, M.},
  \bibinfo{author}{Ljung, L.}, \bibinfo{author}{Chiuso, A.}, \&
  \bibinfo{author}{Pillonetto, G.} (\bibinfo{year}{2014}).
\newblock \bibinfo{title}{System identification via sparse multiple
  kernel-based regularization using sequential convex optimization techniques}.
\newblock {\it \bibinfo{journal}{IEEE Transactions on Automatic Control}\/},
  {\it \bibinfo{volume}{59}\/}, \bibinfo{pages}{2933--2945}.
%Type = Article
\bibitem[{Chen \& Ljung(2013{\natexlab{a}})}]{chen2013implementation}
\bibinfo{author}{Chen, T.}, \& \bibinfo{author}{Ljung, L.}
  (\bibinfo{year}{2013}{\natexlab{a}}).
\newblock \bibinfo{title}{Implementation of algorithms for tuning parameters in
  regularized least squares problems in system identification}.
\newblock {\it \bibinfo{journal}{Automatica}\/},  {\it \bibinfo{volume}{49}\/},
  \bibinfo{pages}{2213--2220}.
%Type = Article
\bibitem[{Chen \& Ljung(2013{\natexlab{b}})}]{CHEN20132213}
\bibinfo{author}{Chen, T.}, \& \bibinfo{author}{Ljung, L.}
  (\bibinfo{year}{2013}{\natexlab{b}}).
\newblock \bibinfo{title}{Implementation of algorithms for tuning parameters in
  regularized least squares problems in system identification}.
\newblock {\it \bibinfo{journal}{Automatica}\/},  {\it \bibinfo{volume}{49}\/},
  \bibinfo{pages}{2213--2220}.
%Type = Article
\bibitem[{Chen \& Ljung(2015{\natexlab{a}})}]{chen2015kernel1}
\bibinfo{author}{Chen, T.}, \& \bibinfo{author}{Ljung, L.}
  (\bibinfo{year}{2015}{\natexlab{a}}).
\newblock \bibinfo{title}{On kernel structures for regularized system
  identification (i): a machine learning perspective}.
\newblock {\it \bibinfo{journal}{IFAC-PapersOnLine}\/},  {\it
  \bibinfo{volume}{48}\/}, \bibinfo{pages}{1035--1040}.
%Type = Article
\bibitem[{Chen \& Ljung(2015{\natexlab{b}})}]{chen2015kernel2}
\bibinfo{author}{Chen, T.}, \& \bibinfo{author}{Ljung, L.}
  (\bibinfo{year}{2015}{\natexlab{b}}).
\newblock \bibinfo{title}{On kernel structures for regularized system
  identification (ii): A system theory perspective}.
\newblock {\it \bibinfo{journal}{IFAC-PapersOnLine}\/},  {\it
  \bibinfo{volume}{48}\/}, \bibinfo{pages}{1041--1046}.
%Type = Article
\bibitem[{Chen et~al.(2012)Chen, Ohlsson \& Ljung}]{EST_TF_REVISITED_2012}
\bibinfo{author}{Chen, T.}, \bibinfo{author}{Ohlsson, H.}, \&
  \bibinfo{author}{Ljung, L.} (\bibinfo{year}{2012}).
\newblock \bibinfo{title}{On the estimation of transfer functions,
  regularizations and gaussian processes-revisited}.
\newblock {\it \bibinfo{journal}{Automatica}\/},  {\it \bibinfo{volume}{48}\/},
  \bibinfo{pages}{1525--1535}.
%Type = Article
\bibitem[{Chen \& Pillonetto(2018)}]{chen2018stability}
\bibinfo{author}{Chen, T.}, \& \bibinfo{author}{Pillonetto, G.}
  (\bibinfo{year}{2018}).
\newblock \bibinfo{title}{On the stability of reproducing kernel hilbert spaces
  of discrete-time impulse responses}.
\newblock {\it \bibinfo{journal}{Automatica}\/},  {\it \bibinfo{volume}{95}\/},
  \bibinfo{pages}{529--533}.
%Type = Article
\bibitem[{Dalla~Libera et~al.(2021)Dalla~Libera, Carli \&
  Pillonetto}]{dalla2021kernel}
\bibinfo{author}{Dalla~Libera, A.}, \bibinfo{author}{Carli, R.}, \&
  \bibinfo{author}{Pillonetto, G.} (\bibinfo{year}{2021}).
\newblock \bibinfo{title}{Kernel-based methods for volterra series
  identification}.
\newblock {\it \bibinfo{journal}{Automatica}\/},  {\it
  \bibinfo{volume}{129}\/}, \bibinfo{pages}{109686}.
%Type = Article
\bibitem[{Darwish et~al.(2018)Darwish, Cox, Proimadis, Pillonetto \&
  T{\'o}th}]{darwish2018prediction}
\bibinfo{author}{Darwish, M.}, \bibinfo{author}{Cox, P.},
  \bibinfo{author}{Proimadis, I.}, \bibinfo{author}{Pillonetto, G.}, \&
  \bibinfo{author}{T{\'o}th, R.} (\bibinfo{year}{2018}).
\newblock \bibinfo{title}{Prediction-error identification of lpv systems: A
  nonparametric gaussian regression approach}.
\newblock {\it \bibinfo{journal}{Automatica}\/},  {\it \bibinfo{volume}{97}\/},
  \bibinfo{pages}{92--103}.
%Type = Article
\bibitem[{Fattore et~al.(2024)Fattore, Peruzzo, Sartori \& Zorzi}]{BJREG_CONF}
\bibinfo{author}{Fattore, G.}, \bibinfo{author}{Peruzzo, M.},
  \bibinfo{author}{Sartori, G.}, \& \bibinfo{author}{Zorzi, M.}
  (\bibinfo{year}{2024}).
\newblock \bibinfo{title}{A kernel-based pem estimator for forward models}.
\newblock {\it \bibinfo{journal}{IFAC-PapersOnLine}\/},  {\it
  \bibinfo{volume}{58}\/}, \bibinfo{pages}{31--36}.
\newblock \bibinfo{note}{20th IFAC Symposium on System Identification SYSID
  2024}.
%Type = Article
\bibitem[{Fujimoto(2021)}]{FUJI2}
\bibinfo{author}{Fujimoto, Y.} (\bibinfo{year}{2021}).
\newblock \bibinfo{title}{Kernel regularization in frequency domain: Encoding
  high-frequency decay property}.
\newblock {\it \bibinfo{journal}{IEEE Control Systems Letters}\/},  {\it
  \bibinfo{volume}{5}\/}, \bibinfo{pages}{367--372}.
%Type = Article
\bibitem[{Fujimoto \& Sugie(2018)}]{FUJI1}
\bibinfo{author}{Fujimoto, Y.}, \& \bibinfo{author}{Sugie, T.}
  (\bibinfo{year}{2018}).
\newblock \bibinfo{title}{Kernel-based impulse response estimation with a
  priori knowledge on the {DC} gain}.
\newblock {\it \bibinfo{journal}{IEEE Control Systems Letters}\/},  {\it
  \bibinfo{volume}{2}\/}, \bibinfo{pages}{713--718}.
%Type = Article
\bibitem[{Goodwin et~al.(1992)Goodwin, Gevers \& Ninness}]{GOODWIN_1992}
\bibinfo{author}{Goodwin, G.}, \bibinfo{author}{Gevers, M.}, \&
  \bibinfo{author}{Ninness, B.} (\bibinfo{year}{1992}).
\newblock \bibinfo{title}{Quantifying the error in estimated transfer functions
  with application to model order selection}.
\newblock {\it \bibinfo{journal}{IEEE Transactions on Automatic Control}\/},
  {\it \bibinfo{volume}{37}\/}, \bibinfo{pages}{913--928}.
%Type = Article
\bibitem[{Guo(2011)}]{guo2011inhibition}
\bibinfo{author}{Guo, D.} (\bibinfo{year}{2011}).
\newblock \bibinfo{title}{Inhibition of rhythmic spiking by colored noise in
  neural systems}.
\newblock {\it \bibinfo{journal}{Cognitive neurodynamics}\/},  {\it
  \bibinfo{volume}{5}\/}, \bibinfo{pages}{293--300}.
%Type = Article
\bibitem[{Khosravi \& Smith(2023)}]{10039069}
\bibinfo{author}{Khosravi, M.}, \& \bibinfo{author}{Smith, R.~S.}
  (\bibinfo{year}{2023}).
\newblock \bibinfo{title}{Kernel-based impulse response identification with
  side-information on steady-state gain}.
\newblock {\it \bibinfo{journal}{IEEE Transactions on Automatic Control}\/},
  {\it \bibinfo{volume}{68}\/}, \bibinfo{pages}{6401--6408}.
%Type = Article
\bibitem[{Kimeldorf \& Wahba(1970)}]{kimeldorf1970correspondence}
\bibinfo{author}{Kimeldorf, G.}, \& \bibinfo{author}{Wahba, G.}
  (\bibinfo{year}{1970}).
\newblock \bibinfo{title}{A correspondence between {B}ayesian estimation on
  stochastic processes and smoothing by splines}.
\newblock {\it \bibinfo{journal}{The Annals of Mathematical Statistics}\/},
  {\it \bibinfo{volume}{41}\/}, \bibinfo{pages}{495--502}.
%Type = Book
\bibitem[{Ljung(1999)}]{LJUNG_SYS_ID_1999}
\bibinfo{author}{Ljung, L.} (\bibinfo{year}{1999}).
\newblock {\it \bibinfo{title}{System Identification: Theory for the User}\/}.
\newblock \bibinfo{address}{New Jersey}: \bibinfo{publisher}{Prentice Hall}.
%Type = Article
\bibitem[{Ljung et~al.(2020)Ljung, Chen \&
  Mu}]{doi:10.1080/00207179.2019.1578407}
\bibinfo{author}{Ljung, L.}, \bibinfo{author}{Chen, T.}, \&
  \bibinfo{author}{Mu, B.} (\bibinfo{year}{2020}).
\newblock \bibinfo{title}{A shift in paradigm for system identification}.
\newblock {\it \bibinfo{journal}{International Journal of Control}\/},  {\it
  \bibinfo{volume}{93}\/}, \bibinfo{pages}{173--180}.
%Type = Book
\bibitem[{MacKay(2003)}]{mackay2003information}
\bibinfo{author}{MacKay, D.} (\bibinfo{year}{2003}).
\newblock {\it \bibinfo{title}{Information Theory, Inference and Learning
  Algorithms}\/}.
\newblock \bibinfo{publisher}{Cambridge University Press}.
%Type = Article
\bibitem[{Marconato et~al.(2017)Marconato, Schoukens \&
  Schoukens}]{marconato2017filter}
\bibinfo{author}{Marconato, A.}, \bibinfo{author}{Schoukens, M.}, \&
  \bibinfo{author}{Schoukens, J.} (\bibinfo{year}{2017}).
\newblock \bibinfo{title}{Filter-based regularisation for impulse response
  modelling}.
\newblock {\it \bibinfo{journal}{IET Control Theory \& Applications}\/},  {\it
  \bibinfo{volume}{11}\/}, \bibinfo{pages}{194--204}.
%Type = Book
\bibitem[{Munkres(2000)}]{munkres2000topology}
\bibinfo{author}{Munkres, J.} (\bibinfo{year}{2000}).
\newblock {\it \bibinfo{title}{Topology}\/}.
\newblock \bibinfo{publisher}{Prentice hall}.
%Type = Article
\bibitem[{Pillonetto \& Bisiacco(2024)}]{PILLONETTO2024111347}
\bibinfo{author}{Pillonetto, G.}, \& \bibinfo{author}{Bisiacco, M.}
  (\bibinfo{year}{2024}).
\newblock \bibinfo{title}{Kernel-based linear system identification: When does
  the representer theorem hold?}
\newblock {\it \bibinfo{journal}{Automatica}\/},  {\it
  \bibinfo{volume}{159}\/}, \bibinfo{pages}{111347}.
%Type = Article
\bibitem[{Pillonetto et~al.(2016)Pillonetto, Chen, Chiuso, {De Nicolao} \&
  Ljung}]{PILLONETTO2016137}
\bibinfo{author}{Pillonetto, G.}, \bibinfo{author}{Chen, T.},
  \bibinfo{author}{Chiuso, A.}, \bibinfo{author}{{De Nicolao}, G.}, \&
  \bibinfo{author}{Ljung, L.} (\bibinfo{year}{2016}).
\newblock \bibinfo{title}{Regularized linear system identification using
  atomic, nuclear and kernel-based norms: The role of the stability
  constraint}.
\newblock {\it \bibinfo{journal}{Automatica}\/},  {\it \bibinfo{volume}{69}\/},
  \bibinfo{pages}{137--149}.
%Type = Article
\bibitem[{Pillonetto \& Chiuso(2022)}]{PILLONETTO2022110169}
\bibinfo{author}{Pillonetto, G.}, \& \bibinfo{author}{Chiuso, A.}
  (\bibinfo{year}{2022}).
\newblock \bibinfo{title}{Linear system identification using the sequential
  stabilizing spline algorithm}.
\newblock {\it \bibinfo{journal}{Automatica}\/},  {\it
  \bibinfo{volume}{138}\/}, \bibinfo{pages}{110169}.
%Type = Article
\bibitem[{Pillonetto et~al.(2011)Pillonetto, Chiuso \&
  De~Nicolao}]{PILLONETTO_2011_PREDICTION_ERROR}
\bibinfo{author}{Pillonetto, G.}, \bibinfo{author}{Chiuso, A.}, \&
  \bibinfo{author}{De~Nicolao, G.} (\bibinfo{year}{2011}).
\newblock \bibinfo{title}{Prediction error identification of linear systems: A
  nonparametric gaussian regression approach}.
\newblock {\it \bibinfo{journal}{Automatica}\/},  {\it \bibinfo{volume}{47}\/},
  \bibinfo{pages}{291--305}.
%Type = Article
\bibitem[{Pillonetto \& De~Nicolao(2010)}]{PILLONETTO_DENICOLAO2010}
\bibinfo{author}{Pillonetto, G.}, \& \bibinfo{author}{De~Nicolao, G.}
  (\bibinfo{year}{2010}).
\newblock \bibinfo{title}{A new kernel-based approach for linear system
  identification}.
\newblock {\it \bibinfo{journal}{Automatica}\/},  {\it \bibinfo{volume}{46}\/},
  \bibinfo{pages}{81--93}.
%Type = Article
\bibitem[{Pillonetto \& Ljung(2023)}]{pillonetto2023full}
\bibinfo{author}{Pillonetto, G.}, \& \bibinfo{author}{Ljung, L.}
  (\bibinfo{year}{2023}).
\newblock \bibinfo{title}{Full bayesian identification of linear dynamic
  systems using stable kernels}.
\newblock {\it \bibinfo{journal}{Proceedings of the National Academy of
  Sciences}\/},  {\it \bibinfo{volume}{120}\/}, \bibinfo{pages}{e2218197120}.
%Type = Book
\bibitem[{Rasmussen \& Williams(2006)}]{RASMUSSEN_WILLIAMNS_2006}
\bibinfo{author}{Rasmussen, C.}, \& \bibinfo{author}{Williams, C.}
  (\bibinfo{year}{2006}).
\newblock {\it \bibinfo{title}{{Gaussian Processes for Machine Learning}}\/}.
\newblock \bibinfo{publisher}{The MIT Press}.
%Type = Article
\bibitem[{Romeres et~al.(2019)Romeres, Zorzi, Camoriano, Traversaro \&
  Chiuso}]{romeres2019derivative}
\bibinfo{author}{Romeres, D.}, \bibinfo{author}{Zorzi, M.},
  \bibinfo{author}{Camoriano, R.}, \bibinfo{author}{Traversaro, S.}, \&
  \bibinfo{author}{Chiuso, A.} (\bibinfo{year}{2019}).
\newblock \bibinfo{title}{Derivative-free online learning of inverse dynamics
  models}.
\newblock {\it \bibinfo{journal}{IEEE Transactions on Control Systems
  Technology}\/},  {\it \bibinfo{volume}{28}\/}, \bibinfo{pages}{816--830}.
%Type = Article
\bibitem[{Schwarz(1978)}]{SCHWARZ_1978}
\bibinfo{author}{Schwarz, G.} (\bibinfo{year}{1978}).
\newblock \bibinfo{title}{{Estimating the Dimension of a Model}}.
\newblock {\it \bibinfo{journal}{The Annals of Statistics}\/},  {\it
  \bibinfo{volume}{6}\/}, \bibinfo{pages}{461--464}.
%Type = Article
\bibitem[{da~Silva \& Vilela(2015)}]{SilvaViella}
\bibinfo{author}{da~Silva, L.~A.}, \& \bibinfo{author}{Vilela, R.~D.}
  (\bibinfo{year}{2015}).
\newblock \bibinfo{title}{Colored noise and memory effects on formal spiking
  neuron models}.
\newblock {\it \bibinfo{journal}{Phys. Rev. E}\/},  {\it
  \bibinfo{volume}{91}\/}, \bibinfo{pages}{062702}.
%Type = Book
\bibitem[{S\"{o}derstr\"{o}m \& Stoica(1989)}]{SODERSTROM_STOICA_1988}
\bibinfo{author}{S\"{o}derstr\"{o}m, T.}, \& \bibinfo{author}{Stoica, P.}
  (\bibinfo{year}{1989}).
\newblock {\it \bibinfo{title}{System Identification}\/}.
\newblock \bibinfo{address}{Hemel Hempstead, UK}:
  \bibinfo{publisher}{Prentice-Hall International}.
%Type = Book
\bibitem[{Zeidler(1995)}]{zeidler1995applied}
\bibinfo{author}{Zeidler, E.} (\bibinfo{year}{1995}).
\newblock {\it \bibinfo{title}{Applied Functional Analysis: Main Principles and
  Their Applications}\/} volume \bibinfo{volume}{109}.
\newblock \bibinfo{publisher}{Springer Science \& Business Media}.
%Type = Article
\bibitem[{Zorzi(2022)}]{BKRON}
\bibinfo{author}{Zorzi, M.} (\bibinfo{year}{2022}).
\newblock \bibinfo{title}{Nonparametric identification of {K}ronecker
  networks}.
\newblock {\it \bibinfo{journal}{Automatica}\/},  {\it
  \bibinfo{volume}{145}\/}, \bibinfo{pages}{110518}.
%Type = Article
\bibitem[{Zorzi(2024)}]{zorzi2021second}
\bibinfo{author}{Zorzi, M.} (\bibinfo{year}{2024}).
\newblock \bibinfo{title}{A second-order generalization of {TC} and {DC}
  kernels}.
\newblock {\it \bibinfo{journal}{IEEE Transactions on Automatic Control}\/},
  {\it \bibinfo{volume}{69}\/}, \bibinfo{pages}{3835--3848}.
%Type = Article
\bibitem[{Zorzi \& Chiuso(2018)}]{ZORZI2018125}
\bibinfo{author}{Zorzi, M.}, \& \bibinfo{author}{Chiuso, A.}
  (\bibinfo{year}{2018}).
\newblock \bibinfo{title}{The harmonic analysis of kernel functions}.
\newblock {\it \bibinfo{journal}{Automatica}\/},  {\it \bibinfo{volume}{94}\/},
  \bibinfo{pages}{125--137}.

\end{thebibliography}

\end{document}